\DeclareMathOperator{\RE}{Re}
\numberwithin{equation}{section}
\newtheoremstyle{fancytheorem}               
{.5\baselineskip±.2\baselineskip}           
{.5\baselineskip±.2\baselineskip}           
{\itshape\addtolength{\leftskip}{0mm}\setlength{\parindent}{0em}}      
{0mm}                                     
{\bfseries}                                 
{}                                  
{.5em}                                      
{\thmname{#1}\thmnumber{ #2}. \thmnote{[#3]}}
\newtheoremstyle{fancydefinition}               
{.5\baselineskip±.2\baselineskip}           
{.5\baselineskip±.2\baselineskip}           
{\upshape\addtolength{\leftskip}{0mm}\setlength{\parindent}{0em}}      
{0mm}                                     
{\bfseries}                                 
{}                                  
{.5em}                                      
{\thmname{#1}\thmnumber{ #2}. \thmnote{[#3]}}
\theoremstyle{fancytheorem}
\newtheorem{theorem}{Theorem}[section]
\newtheorem{remark}[theorem]{Remark}
\theoremstyle{fancydefinition}
\newtheorem{definition}[theorem]{Definition}
\begin{document}\vspace{-4cm}
	\title[On functions starlike with respect to $n$-ply points]{On functions starlike with respect to $n$-ply symmetric, conjugate and symmetric conjugate points}
	
	\author[S. Malik]{Somya Malik}
	\address{Department of Mathematics \\National Institute of Technology\\Tiruchirappalli-620015,  India }
	
	\author[V. Ravichandran]{Vaithiyanathan Ravichandran}
	\email{arya.somya@gmail.com; vravi68@gmail.com, ravic@nitt.edu}
	
	\begin{abstract}
		For given non-negative real numbers $\alpha_k$ with $ \sum_{k=1}^{m}\alpha_k =1$ and normalized analytic functions $f_k$, $k=1,\dotsc,m$, defined on the open unit disc,  let the functions $F$  and $F_n$ be defined by $ F(z):=\sum_{k=1}^{m}\alpha_k f_k (z)$, and $F_{n}(z):=n^{-1}\sum_{j=0}^{n-1} e^{-2j\pi  i/n} F(e^{2j\pi  i/n} z)$. This paper studies the functions $f_k$ satisfying the subordination $zf'_{k} (z)/F_{n} (z)  \prec h(z)$ where  the function $h$  is a convex univalent function  with positive real part. We also consider the analogues of the classes of starlike   functions with respect to symmetric, conjugate, and symmetric conjugate points.  Inclusion and convolution results are proved for these and related classes. Our classes generalize several well-known classes and connection with the previous works are indicated.
	\end{abstract}
	
	\subjclass[2020]{30C80,  30C45}
	
	\keywords{starlike functions; convex functions; symmetric points; conjugate points; convolution}

	\thanks{The first author is supported by  the UGC-JRF Scholarship.}
	
	\dedicatory{Dedicated to Prof.  Bal Kishan Dass on the occasion of his 70th birthday}
	\maketitle
	
	\section{Introduction}
	Let $\mathcal{A}$ be the class of normalized analytic functions $f$ of the form $f(z)=z+\sum_{k=2}^{\infty}a_{k}z^{k}$ defined on the unit disc $\mathbb{D}=\left\{z\in \mathbb{C} : |z|<1\right\}$. For analytic functions $f$ and $g$,\  $f\prec g$ (read: $f$ is subordinate to $g$) if  the condition $f(z)=g(w(z))$ holds for some  analytic function $w:\mathbb{D}\to \mathbb{D}$  with $w(0)=0 $. If the function $g$ is univalent, then $f \prec g $ if and only if $f(0)=g(0)$ and $f(\mathbb{D})\subset g(\mathbb{D})$. The usual subclasses of starlike and convex functions are   characterized respectively by the inequalities $\RE({zf'(z)}/{f(z)})>0$ and $ 1+\RE({zf''(z)}/{f'(z)})>0$. These classes as well as several subclasses of starlike functions and convex functions  are respectively characterized by the subordination \[\dfrac{zf'(z)}{f(z)}\prec h(z)\quad \text{or}\quad 1+\dfrac{zf''(z)}{f'(z)}\prec h(z),\]
	where the function $h$ is a univalent function with positive real part. The classes of functions $f\in \mathcal{A}$ satisfying these subordinations are respectively denoted by $\mathcal{ST}(h)$ and $\mathcal{CV}(h)$. For $0\leqslant \alpha<1$, let $h_\alpha:\mathbb{D}\to\mathbb{C}$ be defined by $h_\alpha(z)=(1+(1-2\alpha )z)/(1-z)$;  the functions in  $\mathcal{ST}(\alpha):=\mathcal{ST}(h_\alpha) $ and $\mathcal{CV}(\alpha):=\mathcal{CV}( h_\alpha)  $ are respectively the usual starlike and convex functions of order $\alpha$.   A function  $f\in\mathcal{A}$ is  starlike with respect to symmetric points if for every $r$ less
	than and sufficiently close to 1 and every $\zeta$
	on $|z|=r$, the angular velocity
	of $f(z)$ about the point $f(-\zeta)$ is positive at $ z=\zeta$ as $z$ traverses the circle
	$|z|=r$ in the positive direction. Analytically, a function   $f\in\mathcal{A}$ is  starlike with respect to symmetric points if
	\[ \RE \frac{zf'(z)}{f(z)-f(-\zeta)}>0\quad \text{for}\quad z=\zeta,\  |\zeta|=r\] or
	equivalently if it satisfies the inequality
	\begin{equation}\label{eqn1.1}
		\RE \dfrac{zf'(z)}{f(z)-f(-z)} >0\quad \text{for all } z\in \mathbb{D};
	\end{equation}
	these functions were  introduced and studied by Sakaguchi \cite{saka}. He also gave a few generalizations of \eqref{eqn1.1} using the $n$-ply points function $f_n(z)=\sum_{j=0}^{n-1} \varepsilon ^{-j} f(\varepsilon^{j}z)$ for $f\in \mathcal{A}$, where $\varepsilon=\mathit{e}^{2\pi i /n}$, $n$ is a positive integer. Properties of integral operators, structural formulas, distortion estimates and subordination results for functions defined by  using  $f_n$  are investigated  in \cite{Mocanu,NPon,PadThang}. The Feketo-Szego inequalities and their applications to classes defined through convolution is investigated  by Shanmugam  et al.\ \cite{ShanRamaRavi}. Recently, Gochhayat and  Prajapati \cite{Goch} studied growth, distortion and radius problem for general class of symmetric points. The classes of starlike functions with respect to conjugate points, and starlike functions with respect to symmetric conjugate points were due to El-Ashwah and Thomas \cite{ElashDK}; these  classes have functions $f$ satisfying respectively the inequalities \[\RE \dfrac{zf'(z)}{f(z)+\overline{f(\overline{z})}}  >0, \quad \text{and}\quad \RE \dfrac{zf'(z)}{f(z)-\overline{f(-\overline{z})}}  >0.\]
	Kasi \cite{kasi} solved some radius problems related to these classes.

	Convolution or the Hadamard product of two functions $f$ and $g$ in $\mathcal{A}$, given by $f(z)=z+\sum_{k=2}^{\infty}a_{k}z^{k}$ and $g(z)=z+\sum_{k=2}^{\infty}b_{k}z^{k}$, is the function $f*g$ given by $(f\ast g)(z)=z+\sum_{k=2}^{\infty}a_{k}b_{k}z^{k}$. Barnard and Kellogg \cite{BerKellogg} gave applications of the theory of convolution for finding the radii of certain well known classes; these ideas and techniques have been widely used. Al-Amiri \cite{amiri} found the radii of the classes he defined, under certain very well known operators. One way to unify these classes of starlike functions with respect to symmetric, conjugate and symmetric conjugate points is the observation that these functions  satisfy
	\[ \RE \frac{zf'(z)}{f(z)+g(z)}>0 \quad \text{and}\quad  \RE \frac{zg'(z)}{f(z)+g(z)}>0\]
	where $g(z)=-f(-z)$, $\overline{f(\overline{z})}$ and $-\overline{f(-\overline{z})}$ respectively. More generally, for given non-negative real numbers $\alpha_k$ with $ \sum_{k=1}^{m}\alpha_k =1$ and normalized analytic functions $f_k:\mathbb{D}\to\mathbb{C}$, $k=1,\dotsc,m$,  let the function $F$ be defined by $ F(z):=\sum_{k=1}^{m}\alpha_k f_k (z)$. Then the general form of the condition is
	$\RE(zf'_k(z)/F(z))>0$.  Using the function $F_n$ obtained from $F$, the subordination relation becomes $zf'_{k} (z)/F_{n} (z)  \prec h(z)$ where  the function $h$  is a convex univalent function  with positive real part.	In this paper, we study these classes as well as other classes related to  symmetric, conjugate and symmetric conjugate  points. We prove inclusion and convolution theorems for these classes. The  results obtained include several known results.
	
	The main tool used in the proofs is a convolution theorem \cite[Theorem 2.4]{rusch1} for pre-starlike functions due to Ruscheweyh. 	For $\alpha <1$, the generalized Koebe function $k_\alpha:\mathbb{D}\to\mathbb{C}$ is defined by
	\[ k_\alpha(z):= \frac{z}{(1-z)^{2-2\alpha}} . \] A function $f\in \mathcal{A}$ is called a pre-starlike function of order $\alpha$ if its convolution $f*k_\alpha$ with the generalized Koebe function $k_\alpha$  is starlike of order $\alpha$.   A pre-starlike function of order 1 is a function $f\in \mathcal{A}$ satisfying the inequality $\RE (f(z)/z)>1/2$.
	The class of pre-starlike functions of order $\alpha$ is denoted by $\mathcal{R}_\alpha$. The next theorem has vast applications; for example, it follows  that  the classes of starlike functions of order $\alpha$ and convex functions of order $\alpha$ are closed  under convolution with functions in the class $\mathcal{R}_\alpha$.
	
	\begin{theorem} \cite[Theorem 2.4]{rusch1}\label{thm1.1}
		For $\alpha\leqslant 1$, let the function $\phi \in \mathcal{R}_\alpha$, and the function $f\in \mathcal{ST}(\alpha)$.  If  the function $H$ is analytic in $\mathbb{D} $,  then the  image of the unit disc $\mathbb{D}$ under the function $(\phi \ast (H f))/(\phi \ast f)$ is contained in the closed convex hull $\overline{co}(H(\mathbb{D})) $ of $H(\mathbb{D})$:
		\begin{equation} \label{Ruschth}
			\dfrac{\phi \ast (H f)}{\phi \ast f}(\mathbb{D})\subset \overline{co}(H(\mathbb{D})).\end{equation}
	\end{theorem}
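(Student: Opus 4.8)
The plan is to fix a point $z_0\in\mathbb{D}$ and show that $w_0:=\big(\phi*(Hf)\big)(z_0)/(\phi*f)(z_0)$ lies in $\overline{co}(H(\mathbb{D}))$; since the quotient is analytic — this uses the basic property that $(\phi*f)(z)/z$ does not vanish on $\mathbb{D}$ for $\phi\in\mathcal{R}_\alpha$ and $f\in\mathcal{ST}(\alpha)$, which I would record first, the case $z_0=0$ being handled by letting $z_0\to 0$ since the quotient tends to $H(0)$ — this is equivalent to the asserted inclusion \eqref{Ruschth}. Now $\overline{co}(H(\mathbb{D}))$ is the intersection of all closed half-planes containing $H(\mathbb{D})$, so it suffices to check that $w_0$ lies in each of them. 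Writing such a half-plane as $\{w:\RE(\overline{a}w)\geqslant c\}$, so that $\RE(\overline{a}H(z))\geqslant c$ on $\mathbb{D}$, and using that the Hadamard product is linear, for $G:=\overline{a}H-c$ one has $\phi*(Gf)=\overline{a}\,(\phi*(Hf))-c\,(\phi*f)$, whence $\big(\phi*(Gf)\big)(z_0)/(\phi*f)(z_0)=\overline{a}w_0-c$. Therefore the theorem reduces to the single statement: if $G$ is analytic on $\mathbb{D}$ with $\RE G\geqslant 0$, then $\RE\big[(\phi*(Gf))/(\phi*f)\big]\geqslant 0$ on $\mathbb{D}$.

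Next I would apply the Herglotz representation to such a $G$: unless $G$ is an imaginary constant (a trivial case), $\RE G>0$ and there is a nonnegative measure $\mu$ on the unit circle with $G(z)=i\IM G(0)+\int_{|x|=1}\frac{1+xz}{1-xz}\,d\mu(x)$. Multiplying by $f$ and convolving with $\phi$ — the interchange of convolution and integration being justified by uniform convergence of the Taylor series on compact subsets of $\mathbb{D}$ — gives
\begin{equation*}
\frac{\phi*(Gf)}{\phi*f}=i\IM G(0)+\int_{|x|=1}\frac{\phi*\big(\frac{1+xz}{1-xz}f\big)}{\phi*f}\,d\mu(x).
\end{equation*}
Since $\mu\geqslant 0$ and the first term is purely imaginary, it is enough that each integrand have nonnegative real part for every $|x|=1$. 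Using $\frac{1+xz}{1-xz}=\frac{2}{1-xz}-1$ and linearity of convolution, the integrand equals $2\,\dfrac{\phi*\frac{f}{1-xz}}{\phi*f}-1$, so the task becomes the \emph{core inequality}
\begin{equation*}
\RE\frac{\big(\phi*\frac{f}{1-xz}\big)(z)}{(\phi*f)(z)}\geqslant\frac12\qquad(z\in\mathbb{D},\ |x|=1).
\end{equation*}

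To establish the core inequality I would argue by the open mapping theorem: the quotient $W(z):=\big(\phi*\frac{f}{1-xz}\big)(z)/(\phi*f)(z)$ is analytic with $W(0)=1$, so $\RE W\geqslant\tfrac12$ follows once one knows $W$ omits every value $c$ with $\RE c<\tfrac12$. Clearing denominators, this is the non-vanishing statement
\begin{equation*}
\frac1z\,\Big(\phi*\Big[\Big(\tfrac{1}{1-xz}-c\Big)f\Big]\Big)(z)\neq 0\qquad(z\in\mathbb{D},\ \RE c<\tfrac12),
\end{equation*}
that is, the prestarlike function $\phi$ does not annihilate the explicit test functions $\big(\tfrac{1}{1-xz}-c\big)f$. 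This is exactly where the full force of the prestarlike hypothesis enters, and it is the main obstacle: one must invoke the duality theory for $\mathcal{R}_\alpha$ — namely the defining relation $\phi*k_\alpha\in\mathcal{ST}(\alpha)$ together with the dual-set characterization of $\mathcal{ST}(\alpha)$ (that $f\in\mathcal{ST}(\alpha)$ iff $\tfrac1z(f*\psi_x)\neq0$ for an explicit family $\psi_x$ of rational starlike generators) — to recast the above convolution as the convolution of a function in $\mathcal{ST}(\alpha)$ against a member of the associated dual family, and thereby conclude it is non-zero. Once the core inequality is secured the reductions above close the argument; the half-plane and Herglotz steps and the measure-theoretic interchange are routine, whereas the prestarlike non-vanishing is the delicate heart of the proof.
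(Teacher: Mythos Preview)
The paper does not contain a proof of this statement: Theorem~\ref{thm1.1} is quoted from Ruscheweyh's monograph \cite[Theorem~2.4]{rusch1} and used as a black box throughout Sections~2--5. There is therefore nothing in the paper to compare your proposal against.

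As for the proposal itself, your outline follows the classical route that Ruscheweyh uses: reduce to half-planes, linearise via the Herglotz representation to the extremal kernels $(1+xz)/(1-xz)$, and then isolate the core inequality $\RE\big[(\phi*\tfrac{f}{1-xz})/(\phi*f)\big]\geqslant\tfrac12$. Your reductions are correct and cleanly stated. However, the final paragraph is only a gesture: you say the non-vanishing of $\tfrac1z\big(\phi*[(\tfrac{1}{1-xz}-c)f]\big)$ for $\RE c<\tfrac12$ follows from ``the duality theory for $\mathcal{R}_\alpha$'' and the ``dual-set characterization of $\mathcal{ST}(\alpha)$'', but you do not actually exhibit the recasting that turns this into a known non-vanishing. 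In Ruscheweyh's development this step is not a one-liner; it rests on an induction on $\alpha$ (or equivalently on the structure theorem for $\mathcal{R}_\alpha$ via the operators $D^\lambda$) together with the convolution invariance of $\mathcal{ST}(\alpha)$ under $\mathcal{R}_\alpha$, and the latter is essentially equivalent to what you are trying to prove. So as written the proposal is a correct road map with the destination clearly marked but the last leg of the journey --- precisely the part you flag as ``the delicate heart'' --- not traversed. If you intend this as a sketch that defers to \cite{rusch1} for that step, it is fine; if you intend it as a self-contained proof, the core inequality still needs an argument.
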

	
	If the function $H$ is convex, then $\overline{co}(H(\mathbb{D}))=\overline{ H(\mathbb{D})}$ and so  the containment in \eqref{Ruschth} becomes the subordination  $ (\phi \ast (H f))/(\phi \ast f) \prec H$. We also need the following theorem of Miller and Mocanu \cite[Corollary 4.1h.1]{MillMoca}.
	
	\begin{theorem}\label{thm1.2}
		Let the function $h$ be   convex in $\mathbb{D}$,    the functions $S$ and $T$ be analytic in $\mathbb{D}$ and $S(0)=T(0)$. If the inequality $\RE (zS'(z)/S(z))>0$ holds and
		${T'(z)}/{S'(z)}\prec h(z)$, then ${T(z)}/{S(z)} \prec h(z)$.
	\end{theorem}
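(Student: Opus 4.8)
The plan is to show directly that $p:=T/S$ maps $\mathbb{D}$ into $h(\mathbb{D})$ with $p(0)=h(0)$, which is exactly the assertion $T/S\prec h$. First I would observe that the hypothesis $\RE(zS'(z)/S(z))>0$ is consistent at $z=0$ only if $S(0)=0$, since otherwise the quotient vanishes there and has real part $0$. Hence $S(0)=T(0)=0$, the function $S$ is starlike, and in particular $S$ is univalent and $S(\mathbb{D})$ is starlike with respect to the origin.

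Next I would set up an integral representation of $T/S$ along the starlike rays of $S$. For fixed $z\in\mathbb{D}$ the segment $[0,S(z)]$ lies in $S(\mathbb{D})$, so $\gamma(s):=S^{-1}(sS(z))$, $s\in[0,1]$, is a well-defined smooth curve in $\mathbb{D}$ joining $0$ to $z$. Differentiating $S(\gamma(s))=sS(z)$ gives $S'(\gamma(s))\gamma'(s)=S(z)$, so that by the fundamental theorem of calculus along $\gamma$,
\[ T(z)=\int_{0}^{1}T'(\gamma(s))\gamma'(s)\,ds = S(z)\int_{0}^{1}\frac{T'(\gamma(s))}{S'(\gamma(s))}\,ds, \]
and therefore
\[ \frac{T(z)}{S(z)}=\int_{0}^{1}\frac{T'(\gamma(s))}{S'(\gamma(s))}\,ds. \]

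The crucial gain is that the averaging is now against the probability measure $ds$ on $[0,1]$, whose mass is real and positive. Since $\gamma(s)\in\mathbb{D}$ and $T'/S'\prec h$, the integrand lies in $h(\mathbb{D})$ for every $s$. As $h$ is convex, $h(\mathbb{D})$ is an open convex set, and the integral of a continuous $h(\mathbb{D})$-valued map against a probability measure stays inside $h(\mathbb{D})$: if the value landed on $\partial h(\mathbb{D})$, a supporting real-linear functional would be strictly below its boundary value throughout $h(\mathbb{D})$, hence strictly below after integration, a contradiction. Thus $p(z)=T(z)/S(z)\in h(\mathbb{D})$ for every $z$. For the normalization, a limit computation ($T(0)=S(0)=0$ with $S'(0)\neq 0$) gives $p(0)=T'(0)/S'(0)$, while the subordination $T'/S'\prec h$ forces $(T'/S')(0)=h(0)$; hence $p(0)=h(0)$ and, $h$ being univalent, $p\prec h$.

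The main obstacle is the justification of the starlike-ray reparametrization $\gamma(s)=S^{-1}(sS(z))$ --- that it is well-defined, smooth, and $\mathbb{D}$-valued --- which is precisely where starlikeness (beyond mere univalence) of $S$ enters, together with the refinement that the average lands in the open region $h(\mathbb{D})$ and not merely in $\overline{co}(h(\mathbb{D}))$. An alternative would be to rewrite the hypothesis as $T'/S'=p+zp'/q$ with $q=zS'/S$, $\RE q>0$, recognizing it as a Briot--Bouquet differential subordination and invoking the admissible-function machinery of Miller and Mocanu; but the integral route above is self-contained and makes the role of convexity transparent.
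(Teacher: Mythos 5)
Your argument is sound, and the first thing to say is that the paper itself offers no proof to compare it with: Theorem~\ref{thm1.2} is imported verbatim from Miller and Mocanu \cite[Corollary 4.1h.1]{MillMoca}. The relevant comparison is therefore with the source's proof, which is exactly the ``alternative'' you name in your closing sentence: with $p=T/S$ and $Q=zS'/S$ one writes $T'/S'=p+zp'/Q$, and the conclusion follows from the Miller--Mocanu lemma that $p+P\,zp'\prec h$ with $\RE P>0$ and $h$ convex forces $p\prec h$, proved by the contact-point (admissibility) machinery: if $p\not\prec h$ there are $z_0\in\mathbb{D}$, $\zeta_0\in\partial\mathbb{D}$, $m\geqslant 1$ with $p(z_0)=h(\zeta_0)$ and $z_0p'(z_0)=m\zeta_0h'(\zeta_0)$, and convexity pushes $p(z_0)+z_0p'(z_0)/Q(z_0)$ outside $\overline{h(\mathbb{D})}$. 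Your main route replaces all of that by the ray-parametrization identity $T(z)/S(z)=\int_0^1(T'/S')(\gamma(s))\,ds$, $\gamma(s)=S^{-1}(sS(z))$, plus the elementary fact that an average of values in the open convex set $h(\mathbb{D})$ against a probability measure stays in $h(\mathbb{D})$; the supporting-functional step and the normalization $p(0)=T'(0)/S'(0)=h(0)$ are both correct. What the machinery buys is generality (it needs no global univalence of $S$ and extends to Briot--Bouquet-type subordinations); what your route buys is a short, self-contained proof in which the role of convexity of $h$ is completely transparent.

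One step does need repair: the inference that $\RE(zS'(z)/S(z))>0$ makes $S$ univalent. That requires $S$ to have a \emph{simple} zero at the origin. The hypothesis as literally stated is also satisfied by $S(z)=z^{2}$, for which $zS'(z)/S(z)\equiv 2$, yet $S$ is not univalent and $S^{-1}$, hence your curve $\gamma$, does not exist. This is an artifact of transcribing ``$S$ starlike'' as the inequality: the intended hypothesis, used in all of the paper's applications (there $S$ is the normalized function $F_n$ or $G_n$, so $S'(0)=1$) and present in the cited source, is that $S$ is starlike univalent. So either record $S'(0)\neq 0$ as a standing assumption or patch the degenerate case, which your method handles with almost no change: if $S$ vanishes to order $k$ at $0$, then $S$ has no other zeros in $\mathbb{D}$, the root $w:=S^{1/k}$ is a normalized starlike (univalent) function since $zw'/w=(1/k)\,zS'/S$, and integrating along $\gamma(s)=w^{-1}(s\,w(z))$ gives
\[
\frac{T(z)}{S(z)}=\int_{0}^{1}\frac{T'(\gamma(s))}{S'(\gamma(s))}\,k\,s^{k-1}\,ds,
\]
again an average against a probability measure on $[0,1]$, after which your convexity argument applies verbatim.
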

	
	\section{The Classes $\mathcal{ST}_{n,m} (h)$ and $\mathcal{CV}_{n,m} (h)$}
	
	For a given positive integer $m$, let the class $\mathcal{A}^{m}$ be defined by $\mathcal{A}^{m}:=\{\widehat{f}:=(f_1,f_2,\dotsc,f_m): f_{k}\in \mathcal{A},\ 1 \leqslant k \leqslant m\}$. For $0\leqslant \alpha_k \leqslant 1$, with $\sum_{k=1}^{m}\alpha_k =1$, and the function $\widehat{f}\in \mathcal{A}^{m}$, define the function $F:\mathbb{D}\to\mathbb{C}$ given by
	\begin{equation}\label{eqn2.1}
		F(z)=\sum_{k=1}^{m}\alpha_k f_k (z).
	\end{equation}
	For an integer $n\geqslant 1$, the $n$-ply points function $F_n: \mathbb{D}\to\mathbb{C}$ corresponding to the function $F$ is defined by
	\begin{equation}\label{eqn2.2}
		F_{n}(z):=\frac{1}{n}\sum_{j=0}^{n-1}\varepsilon^{-j} F(\varepsilon^{j} z)
	\end{equation}
	where $\varepsilon  =\mathit{e}^{2\pi i /n}$ is the primitive $n$th root of unity.
	For $\widehat{f}\in \mathcal{A}^{m}$ and $g\in\mathcal{A}$, the convolution
	$\widehat{f} *g$ is defined term-wise   by  $\widehat{f}\ast g:=( f_1 \ast g, f_2 \ast g, \ldots , f_m \ast g )$ and the ordinary product
	$g\widehat{f}:=( gf_1 ,g f_2 , \ldots ,g f_m  )$. The derivative $\widehat{f}'$ is defined by $\widehat{f}':=(f_1',f_2',\dotsc,f_m')$ and the conjugate $\overline{\widehat{f}}$ by $\overline{\widehat{f}}:=(\overline{f_1},\overline{f_2},\dotsc,\overline{f_m})$.  With these notations, the following classes of starlike and convex functions related to $\widehat{f}$ and $F_{n}$ are defined.
	
	\begin{definition}
		Let $h$ be a convex univalent function with positive real part. The classes of starlike and convex functions related to $\widehat{f}$ and $F_{n}$, denoted by $\mathcal{ST}_{n,m} (h)$ and $\mathcal{CV}_{n,m} (h)$ consist of functions $\widehat{f}\in \mathcal{A}^{m}$ satisfying respectively the subordinations \[  \dfrac{zf'_{k} (z)}{F_{n} (z)} \prec h(z)\quad \text{and}\quad 	\dfrac{(zf'_{k})'(z)}{F'_{n}(z)} \prec h(z).\]
		For a fixed $g\in \mathcal{A}$,  the class $\mathcal{ST}_{n,m} (g,h)$ consists of all  $ \widehat{f}$ for which $ \hat{f}\ast g \in \mathcal{ST}_{n,m} (h)$   and     the class $\mathcal{CV}_{n,m} (g,h)$ consists of all functions $\widehat{f}$ for which $\widehat{f}\ast g\in \mathcal{CV}_{n,m} (h)$.
	\end{definition}
	
	For $n=1$, and $\alpha_k=1/m$ for all $1\leqslant k\leqslant m$, the classes $\mathcal{ST}_{n,m}(g,h)$ and $\mathcal{CV}_{n,m}(g,h)$ reduce to the classes $\mathcal{S}^{*}_{m}(g,h)$ and $\mathcal{K}_{m}(g,h)$ respectively; investigated in \cite{RosMahRaviSub}. For $g(z)=z/(1-z)^{a}$, with all the afore mentioned particular values for $n$ and $\alpha_k$, the class $\mathcal{ST}_{n,m}(g,h)$ becomes the class $\mathcal{S}_{a}(h)$ studied in \cite{PadPar}. The class $\mathcal{CV}_{n,m}(g,h)$ is the class $\mathcal{K}_{a}(h)$ studied in \cite{PadMan}. For $n=m=1$, the two classes $\mathcal{ST}_{n,m}(g,h)$ and $\mathcal{CV}_{n,m}(g,h)$ are reduced respectively to  $\mathcal{S}^{*}(g,h)$ and $\mathcal{K}(g,h)$ studied in \cite{shan}.
	
	It follows immediately that  $\mathcal{ST}_{n,m} (z/(1-z),h)=   \mathcal{ST}_{n,m} (h)$, $\mathcal{CV}_{n,m} (z/(1-z),h)=   \mathcal{CV}_{n,m} (h)$ and $\mathcal{ST}_{n,m} (z/(1-z)^2,h)=   \mathcal{CV}_{n,m} (h)$.  Also, the following Alexander relationship holds: $\widehat{f}\in \mathcal{CV}_{n,m} (h)$ if and only if  $z\widehat{f'}\in \mathcal{ST}_{n,m}(h)$ and  $\widehat{f}\in \mathcal{CV}_{n,m} (g,h)$ if and only if  $z\widehat{f'}\in \mathcal{ST}_{n,m}(g,h)$.
	
	The first theorem in this section discusses the starlikeness of the $n$-ply points function $F_{n}$.
	
	\begin{theorem}\label{thm2.2} If  the function   $h$ is    a convex univalent function with positive real part satisfying $h(0)=1$ then,  for $ 1\leqslant k\leqslant m$, the component  $f_k$ of $\widehat{f} \in  \mathcal{ST}_{n,m}(h)$  is close-to-convex and hence univalent. Also, the $n$-ply points function $F_n $ belongs to the class $\mathcal{ST}(h)$.
	\end{theorem}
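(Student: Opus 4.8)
The plan is to prove the two assertions in the reverse of the order stated: first show that $F_n \in \mathcal{ST}(h)$ (so that $F_n$ is in particular a normalized starlike function), and then use this to deduce that each component $f_k$ is close-to-convex. The whole argument rests on one structural identity for the $n$-ply points function together with the convex-combination principle for subordination to a convex function.

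First I would record the relevant properties of $F_n$. Since each $f_k \in \mathcal{A}$ and $\sum_k \alpha_k = 1$, the function $F$ has the normalized form $F(z) = z + \sum_{p \geqslant 2} A_p z^p$. Substituting this series into \eqref{eqn2.2} and using that $\frac1n \sum_{j=0}^{n-1} \varepsilon^{j(p-1)}$ equals $1$ when $p \equiv 1 \pmod n$ and $0$ otherwise, one finds $F_n(z) = z + \sum_{p \equiv 1 \,(\mathrm{mod}\, n)} A_p z^p$; in particular $F_n \in \mathcal{A}$. A reindexing of the defining sum \eqref{eqn2.2} also yields the rotation identity $F_n(\varepsilon z) = \varepsilon F_n(z)$, and hence $F_n(\varepsilon^j z) = \varepsilon^j F_n(z)$ for every $j$. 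This symmetry is the crucial input.

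Next I would establish $z F_n'(z)/F_n(z) \prec h(z)$. Differentiating \eqref{eqn2.2} termwise gives $z F_n'(z) = \frac1n \sum_{j=0}^{n-1} z F'(\varepsilon^j z)$, so that, using $F_n(z) = \varepsilon^{-j} F_n(\varepsilon^j z)$ and $F' = \sum_k \alpha_k f_k'$,
\[ \frac{z F_n'(z)}{F_n(z)} = \frac1n \sum_{j=0}^{n-1} \frac{\varepsilon^j z\, F'(\varepsilon^j z)}{F_n(\varepsilon^j z)} = \frac1n \sum_{j=0}^{n-1} \sum_{k=1}^{m} \alpha_k \frac{\varepsilon^j z\, f_k'(\varepsilon^j z)}{F_n(\varepsilon^j z)}. \]
For each fixed $j$ and $k$, the defining subordination $w f_k'(w)/F_n(w) \prec h(w)$ composed with the rotation $w = \varepsilon^j z$ (an automorphism of $\mathbb{D}$ fixing the origin) shows that $\varepsilon^j z\, f_k'(\varepsilon^j z)/F_n(\varepsilon^j z) \prec h(z)$. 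Thus $z F_n'(z)/F_n(z)$ is a convex combination, with nonnegative weights $\alpha_k/n$ summing to $1$, of functions each subordinate to the convex univalent function $h$. Since $h(\mathbb{D})$ is convex, every value of this combination lies in $\overline{h(\mathbb{D})}$, and as its value at the origin is $h(0) = 1$, the open mapping theorem together with the univalence of $h$ gives $z F_n'(z)/F_n(z) \prec h(z)$; that is, $F_n \in \mathcal{ST}(h)$. Finally, since $h$ has positive real part with $h(0) = 1$, this membership forces $\RE\big(z F_n'(z)/F_n(z)\big) > 0$, so $F_n$ is starlike; and for a fixed component $f_k$ the defining subordination combined with $\RE h > 0$ yields $\RE\big(z f_k'(z)/F_n(z)\big) > 0$, which is precisely Kaplan's analytic criterion for close-to-convexity with $F_n$ as associated starlike function, whence $f_k$ is close-to-convex and therefore univalent.

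The step demanding the most care is the passage in the third paragraph: one must notice that the rotation identity $F_n(\varepsilon^j z) = \varepsilon^j F_n(z)$ is exactly what makes each rotated quotient $\varepsilon^j z\, f_k'(\varepsilon^j z)/F_n(\varepsilon^j z)$ subordinate to $h$, after which convexity of $h$ closes the argument through the convex-combination principle. The series computation and the final appeal to the close-to-convexity criterion are routine.
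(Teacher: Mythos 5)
Your proof is correct and follows essentially the same route as the paper: both establish $F_n\in\mathcal{ST}(h)$ first via the rotation identity $F_n(\varepsilon^j z)=\varepsilon^j F_n(z)$ and the convexity of $h(\mathbb{D})$ applied to an averaged convex combination of the defining subordinations, then deduce close-to-convexity of each $f_k$ from $\RE h>0$ and the starlikeness of $F_n$. The only cosmetic differences are that you fold the averages over $j$ and $k$ into a single double sum (the paper first sums over $k$ to get $zF'(z)/F_n(z)$, then averages over rotations) and that you phrase the rotation step as composition of subordinations rather than pointwise membership in $h(\mathbb{D})$; these are equivalent.
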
 	
	
	\begin{proof}For a given  $\widehat{f}\in \mathcal{ST}_{n,m} (h)$, it is first shown that the $n$-ply points function $F_n $ given by \eqref{eqn2.2}  is starlike. The definition of the class $\mathcal{ST}_{n,m}(h)$ gives $zf'_{k} (z)/F_{n} (z) \in  h(\mathbb{D})$ for each $z\in \mathbb{D}$. Since $ \alpha_k$'s are non-negative real numbers with $\sum_{k=1}^{m} \alpha_k=1$, the convexity of $h(\mathbb{D})$ readily shows that $\sum_{k=1}^{m} \alpha_k \left(zf'_{k} (z)/F_{n} (z)\right) \in h(\mathbb{D})$ for each $z\in \mathbb{D}$.  The definition of $F$ given in \eqref{eqn2.1} implies that  $zF' (z)/F_{n}(z) \in h(\mathbb{D})$ for each $z\in \mathbb{D}$.
		
		The Taylor series of the function $F_n$ in \eqref{eqn2.2}, gives that $F_{n}(\varepsilon ^{j} z)=\varepsilon ^{j} F_{n}(z)$. Using this identity and
		replacing $z$ by $\varepsilon ^{j} z$  in  $zF' (z)/F_{n}(z) \in h(\mathbb{D})$,  it  is easy to infer that,
		\begin{equation}\label{eqn2.3}
			\dfrac{zF'(\varepsilon ^{j} z)}{F_{n}(z)} \in h(\mathbb{D})
		\end{equation}
		for each $z\in \mathbb{D}$. From \eqref{eqn2.2}, it follows that $F_{n}'(z):=\sum_{j=0}^{n-1}\ (F'(\varepsilon^{j} z))/n$. Using this and the convexity of $h(\mathbb{D})$,  \eqref{eqn2.3} yields  \[\frac{zF_{n}'(z)}{F_{n}(z)} =
		\frac{1}{n}\sum_{j=0}^{n-1} \dfrac{zF'(\varepsilon ^j z)}{F_{n}(z)} \in h(\mathbb{D})\]
		for each $z\in\mathbb{D}$.
		Thus,  $ zF_{n}'(z) /F_{n}(z)  \prec h(z)$ or $F_{n}\in \mathcal{ST}(h)$. Since $h$ is a function with positive real part, it follows that $\mathcal{ST}(h)\subset \mathcal{ST}$ and thus $F_{n} \in \mathcal{ST}$.
		
		Since $h$ is a function with positive real part, it follows that the function $zf'_{k} (z)/F_{n} (z)$ has positive real part in $\mathbb{D}$. The starlikeness of $F_n$ immediately shows that the function $f_k\in \mathcal{A}$ is close-to-convex and univalent.
	\end{proof}
	
	Containment relation between the classes $\mathcal{CV}_{n,m}(h)$ and $\mathcal{ST}_{n,m}(h)$ is discussed in the following theorem.
	\begin{theorem}\label{thm2.3} If   $h$ is  a convex univalent function with positive real part satisfying $h(0)=1$, then   $\mathcal{CV}_{n,m}(h) \subset \mathcal{ST}_{n,m}(h)$ and,  for $ 1\leqslant k\leqslant m$, the component  $f_k$ of $\widehat{f} \in  \mathcal{CV}_{n,m}(h)$  is close-to-convex and hence univalent.  Also, the $n$-ply points function $F_n $ belongs to the class $\mathcal{CV}(h)$.
	\end{theorem}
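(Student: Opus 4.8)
The plan is to transport the classical implication ``convex $\Rightarrow$ starlike'' to the $n$-ply setting, using the Alexander relationship recorded before the statement together with Theorem~\ref{thm2.2} and the Miller--Mocanu result Theorem~\ref{thm1.2}. Throughout, fix $\widehat{f}\in\mathcal{CV}_{n,m}(h)$ and set $\widehat{g}:=z\widehat{f'}$, with components $g_k=zf_k'$; the Alexander relationship then gives $\widehat{g}\in\mathcal{ST}_{n,m}(h)$.

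The one computational input is the identity $G_n(z)=zF_n'(z)$, where $G:=\sum_{k=1}^m\alpha_k g_k$ is the auxiliary sum attached to $\widehat{g}$. Indeed $G(z)=\sum_{k}\alpha_k zf_k'(z)=zF'(z)$, and differentiating \eqref{eqn2.2} gives $F_n'(z)=n^{-1}\sum_{j=0}^{n-1}F'(\varepsilon^j z)$; substituting $G(\varepsilon^j z)=\varepsilon^j zF'(\varepsilon^j z)$ into the definition of $G_n$, the factors $\varepsilon^{-j}$ and $\varepsilon^{j}$ cancel and one obtains $G_n(z)=n^{-1}\sum_{j=0}^{n-1}zF'(\varepsilon^j z)=zF_n'(z)$.

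Now apply Theorem~\ref{thm2.2} to $\widehat{g}\in\mathcal{ST}_{n,m}(h)$: its $n$-ply points function $G_n$ lies in $\mathcal{ST}(h)$. Since $G_n=zF_n'$, the usual Alexander relation for a single function (namely $z\phi'\in\mathcal{ST}(h)$ iff $1+z\phi''/\phi'\prec h$ iff $\phi\in\mathcal{CV}(h)$) yields $F_n\in\mathcal{CV}(h)$, which is the last assertion; as $h$ has positive real part this also makes $F_n$ convex, hence starlike, so that $\RE(zF_n'(z)/F_n(z))>0$. For the inclusion I would then invoke Theorem~\ref{thm1.2} with $S:=F_n$ and $T:=zf_k'$: here $S(0)=T(0)=0$, the starlikeness of $F_n$ supplies $\RE(zS'(z)/S(z))>0$, and the membership $\widehat{f}\in\mathcal{CV}_{n,m}(h)$ is exactly $T'(z)/S'(z)=(zf_k')'(z)/F_n'(z)\prec h(z)$. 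The theorem then gives $T(z)/S(z)=zf_k'(z)/F_n(z)\prec h(z)$, i.e.\ $\widehat{f}\in\mathcal{ST}_{n,m}(h)$, proving $\mathcal{CV}_{n,m}(h)\subset\mathcal{ST}_{n,m}(h)$. The close-to-convexity, and hence univalence, of each $f_k$ is then immediate: by the inclusion just established $\widehat{f}\in\mathcal{ST}_{n,m}(h)$, so Theorem~\ref{thm2.2} applies directly.

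I expect the difficulty to be organizational rather than deep. The only place to be careful is the identity $G_n=zF_n'$, where the cancellation of $\varepsilon^{-j}$ against $\varepsilon^{j}$ must be checked; and one should note that close-to-convexity of $f_k$ cannot simply be inherited from that of $g_k=zf_k'$ (the Alexander operation need not preserve close-to-convexity), which is why I route it through the inclusion and Theorem~\ref{thm2.2} rather than through $\widehat{g}$.
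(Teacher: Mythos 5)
Your proof is correct, and its core coincides with the paper's: both arguments hinge on first establishing $F_n\in\mathcal{CV}(h)$, then using the positive real part of $h$ to conclude that $F_n$ is convex and hence starlike, then applying Theorem~\ref{thm1.2} with $S=F_n$ and $T=zf_k'$ (noting $T'/S'=(zf_k')'/F_n'\prec h$) to get the inclusion $\mathcal{CV}_{n,m}(h)\subset\mathcal{ST}_{n,m}(h)$, and finally reading off close-to-convexity and univalence of each $f_k$ from Theorem~\ref{thm2.2}. Where you genuinely differ is the route to the intermediate claim $F_n\in\mathcal{CV}(h)$. The paper obtains it by re-running the computation of Theorem~\ref{thm2.2} at the level of derivatives (convex combinations of $(zf_k')'/F_n'$, the identity $F_n'(\varepsilon^j z)=F_n'(z)$, and averaging over $j$), a step it leaves implicit as ``arguments similar to the ones used in the proof of Theorem~\ref{thm2.2}.'' You instead make this a formal consequence of Theorem~\ref{thm2.2} itself: pass to $\widehat{g}=z\widehat{f'}\in\mathcal{ST}_{n,m}(h)$ via the Alexander relationship, verify the identity $G_n(z)=zF_n'(z)$ (the one computation in your argument, which you carry out correctly, and which is also what justifies the paper's unproved Alexander relation), conclude $G_n\in\mathcal{ST}(h)$ from Theorem~\ref{thm2.2}, and then apply the classical single-function Alexander correspondence $z\phi'\in\mathcal{ST}(h)\iff\phi\in\mathcal{CV}(h)$. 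Your route avoids repeating the symmetric-averaging argument and makes the paper's elided step fully rigorous at the cost of introducing $\widehat{g}$ and $G_n$; the paper's route is more direct but asks the reader to redo the computation. Your closing caution---that close-to-convexity of $f_k$ must come from the established inclusion plus Theorem~\ref{thm2.2}, not be transported through the Alexander operation---is correct and matches the paper's treatment.
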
 	
	
	\begin{proof}
		For $\widehat{f} \in \mathcal{CV}_{n,m} (h)$, it follows directly from the definition that  $(zf'_{k})'(z)/F'_{n}(z)  \prec  h(z)$. By following arguments similar to the ones used in the proof of Theorem~\ref{thm2.2}, it can be shown that $F_n\in \mathcal{CV}(h)$. Since $h$ has positive real part, it follows that  $F_n$ is convex  and in particular  starlike. Applying Theorem \ref{thm1.2} yields $ zf'_{k}(z)/F_n(z) \prec h(z)$, proving  $\widehat{f} \in \mathcal{ST}_{n,m}(h)$. Close-to-convexity and univalence of $f_k$ follows from Theorem~\ref{thm2.2}.
	\end{proof}
	
	A class $\mathcal{F}$ is said to be closed under convolution with functions in the class $\mathcal{G}$ if $f*g\in \mathcal{F}$ for each $f\in \mathcal{F}$ and $g\in \mathcal{G}$. If $\mathcal{G}=\mathcal{CV}$ or $\mathcal{R}_\alpha$, then the statement becomes respectively that $\mathcal{F}$ is closed under convolution with convex functions or closed under convolution with pre-starlike functions of order $\alpha$.
	Ruschweyeh and Sheil-Small \cite{ruscSS} have shown  that the classes of convex functions, starlike functions and close-to-convex functions are closed under convolution with convex functions. The next theorem extends this result to  the classes $\mathcal{ST}_{n.m}(g,h)$ and $\mathcal{CV}_{n,m}(g,h)$; each of these classes is shown to be closed under convolution with pre-starlike functions of order $\alpha$ where $h$ is a convex univalent function with real part greater than $\alpha$.

	\begin{theorem}\label{thm2.4} Let $g$ be a fixed function in $\mathcal{A}$. Let $h$ be a convex univalent function satisfying $\RE(h(z))>\alpha,\ h(0)=1,\ 0\leqslant \alpha < 1$. Then the classes $\mathcal{ST}_{n,m} (g,h)$, $\mathcal{CV}_{n,m} (g,h)$ and, in particular, the classes $\mathcal{ST}_{n,m} (h)$ and $\mathcal{CV}_{n,m} (h)$ are closed under convolutions with pre-starlike functions of order $\alpha$.
	\end{theorem}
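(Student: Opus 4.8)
The plan is to reduce the whole statement to a single core fact: that $\mathcal{ST}_{n,m}(h)$ is closed under convolution with $\mathcal{R}_\alpha$. The two reductions are purely formal. For the $g$-versions, since convolution is associative and commutative, for $\widehat{f}\in\mathcal{ST}_{n,m}(g,h)$ and $\phi\in\mathcal{R}_\alpha$ one has componentwise $(\widehat{f}*\phi)*g=(\widehat{f}*g)*\phi$; as $\widehat{f}*g\in\mathcal{ST}_{n,m}(h)$, closure of $\mathcal{ST}_{n,m}(h)$ under convolution with $\phi$ gives $\widehat{f}*\phi\in\mathcal{ST}_{n,m}(g,h)$. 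For the convex classes I would use the Alexander relationship $\widehat{f}\in\mathcal{CV}_{n,m}(h)\iff z\widehat{f'}\in\mathcal{ST}_{n,m}(h)$ together with the componentwise identity $z(\widehat{f}*\phi)'=(z\widehat{f'})*\phi$, which turns the convex case into the starlike case, and then the same associativity argument upgrades $\mathcal{CV}_{n,m}(h)$ to $\mathcal{CV}_{n,m}(g,h)$.

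For the core fact, take $\widehat{f}\in\mathcal{ST}_{n,m}(h)$ and $\phi\in\mathcal{R}_\alpha$, and write $g_k=f_k*\phi$, $\Phi=\sum_k\alpha_k g_k=F*\phi$. Two elementary convolution identities drive the calculation: first, $(F*\phi)_n=F_n*\phi$, since the $n$-ply operator retains exactly the coefficients of index $\equiv 1\pmod n$ and therefore commutes with the Hadamard product; second, $z(f_k*\phi)'=(zf_k')*\phi$. Consequently the quantity I must control is
\[
\frac{z g_k'(z)}{\Phi_n(z)}=\frac{(zf_k')*\phi(z)}{F_n*\phi(z)}=\frac{\phi*(H\,F_n)(z)}{\phi*F_n(z)},\qquad H(z):=\frac{zf_k'(z)}{F_n(z)},
\]
where $H\,F_n=zf_k'$ by the definition of $H$, and $\widehat{f}\in\mathcal{ST}_{n,m}(h)$ is exactly the statement $H\prec h$.

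Next I would invoke Theorem~\ref{thm1.1}. By Theorem~\ref{thm2.2} the $n$-ply function $F_n$ lies in $\mathcal{ST}(h)$, and since $\RE(h(z))>\alpha$ this sharpens to $F_n\in\mathcal{ST}(\alpha)$. Applying Theorem~\ref{thm1.1} with the pre-starlike $\phi\in\mathcal{R}_\alpha$, the starlike $F_n\in\mathcal{ST}(\alpha)$, and the analytic function $H$, gives
\[
\frac{\phi*(H\,F_n)}{\phi*F_n}(\mathbb{D})\subset\overline{co}\,(H(\mathbb{D})).
\]
Because $H\prec h$ we have $H(\mathbb{D})\subset h(\mathbb{D})$.

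The only non-routine step — and the one I would flag as the main obstacle — is converting this image containment into the subordination $zg_k'/\Phi_n\prec h$. The remark following Theorem~\ref{thm1.1} applies only when the inner function is convex, which $H$ need not be; so I would instead use the convexity of $h$. Since $h$ is convex univalent, $h(\mathbb{D})$ is a convex domain and hence $\overline{co}\,(H(\mathbb{D}))\subseteq\overline{co}\,(h(\mathbb{D}))=\overline{h(\mathbb{D})}$, so $p:=zg_k'/\Phi_n$ maps $\mathbb{D}$ into $\overline{h(\mathbb{D})}$. The normalizations $F=z+\cdots$ and $\phi=z+\cdots$ force $\Phi_n=z+\cdots$ and $(zf_k')*\phi=z+\cdots$, whence $p(0)=1=h(0)$. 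If $p$ is non-constant (the constant case being trivial), the open mapping theorem places $p(\mathbb{D})$ inside the interior of $\overline{h(\mathbb{D})}$, which for a convex domain is precisely $h(\mathbb{D})$; together with $p(0)=h(0)$ this is exactly $p\prec h$. Carrying this out for every $k$ yields $\widehat{f}*\phi\in\mathcal{ST}_{n,m}(h)$, establishing the core fact and, through the two reductions above, the full theorem.
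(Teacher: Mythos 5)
Your proposal is correct and follows essentially the same route as the paper: the same reduction of the $g$-classes via associativity of convolution, the same use of the Alexander relationship for the convex classes, and the same core argument applying Theorem~\ref{thm1.1} with $F_n\in\mathcal{ST}(h)\subset\mathcal{ST}(\alpha)$ (from Theorem~\ref{thm2.2}) and $H_k=zf_k'/F_n$. The only difference is that you explicitly justify the passage from the containment of the image in $\overline{co}(H_k(\mathbb{D}))\subseteq\overline{h(\mathbb{D})}$ to the subordination $z(f_k*\phi)'/(\phi*F)_n\prec h$ (via $p(0)=1$, the open mapping theorem, and the fact that the interior of the closure of a convex domain is the domain itself), a boundary point the paper's proof leaves implicit.
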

	
	\begin{proof}
		First it is proved that the class $\mathcal{ST}_{n,m} (h)$ is closed under convolutions with pre-starlike functions of order $\alpha$; in other words, $\widehat{f} \ast \phi \in \mathcal{ST}_{n,m} (h)$ for each $\widehat{f} \in \mathcal{ST}_{n,m} (h)$ and each $\phi \in \mathcal{R}_\alpha$.	For $k=1,2\ldots,m$ and $n\geqslant 1$, define the function $H_{k}:\mathbb{D}\to\mathbb{C}$ by
		\[  H_{k}(z):=\dfrac{zf'_{k}(z)}{F_{n}(z)}.\]
		By Theorem \ref{thm2.2}, the function $F_n\in \mathcal{ST}(h)$. Since $\RE (h(z))>\alpha$, it follows that  $F_n\in \mathcal{ST} (\alpha)$. 		An application of Theorem \ref{thm1.1} shows that
		\begin{equation}\label{eqn2.4a} \dfrac{(\phi \ast (H_{k} F_n))(z)}{(\phi \ast F_n)(z)} \subset \overline{co}(H_{k}(\mathbb{D})). \end{equation}
		Since
		\begin{equation}\label{eqn2.4}
			\dfrac{(\phi \ast (H_{k} F_n))(z)}{(\phi \ast F_n)(z)}
			= \dfrac{(\phi \ast zf'_{k})(z)}{(\phi \ast F_{n})(z)}
			= \dfrac{z(\phi \ast f_{k})'(z)}{(\phi \ast F)_{n}(z)}
		\end{equation}
		for every $z\in \mathbb{D}$  and  $H_{k}(z)\prec h(z)$, it follows from \eqref{eqn2.4a} and  \eqref{eqn2.4}  that
		\[\dfrac{z(\phi \ast f_{k})'(z)}{(\phi \ast F)_{n}(z)} \prec h(z).\]
		This proves that $ \phi \ast \widehat{f} \in \mathcal{ST}_{n,m} (h)$.

		If $\widehat{f} \in \mathcal{ST}_{n,m} (g,h)$, then $\widehat{f} \ast g\in \mathcal{ST}_{n,m}(h)$. Since it is already proved that   the class $\mathcal{ST}_{n,m} (h)$ is closed under convolution with the class $\mathcal{R}_{\alpha}$, the associative property of convolution gives $ (\widehat{f} \ast \phi  )\ast g= (\widehat{f} \ast g )\ast \phi\in \mathcal{ST}_{n,m} (h) $ for $\phi\in \mathcal{R}_\alpha$. Therefore,  $\widehat{f} \ast \phi \in \mathcal{ST}_{n,m}(g,h)$.
		
		Finally, the statement $\widehat{f} \ast \phi \in \mathcal{CV}_{n,m} (g,h)$ for $\widehat{f} \in \mathcal{CV}_{n,m} (g,h)$ and $\phi\in\mathcal{R}_\alpha$ is to be proved. Let $\widehat{f} \in \mathcal{CV}_{n,m} (g,h)$. Then, by the Alexander relationship between the  classes $\mathcal{ST}_{n,m} (g,h)$ and $\mathcal{CV}_{n,m} (g,h)$, it is inferred that $  z\widehat{f'} \in \mathcal{ST}_{n,m}(g,h)$. Using what is already proved for $\mathcal{ST}_{n,m} (g,h)$, it follows that $(z \widehat{f'} \ast \phi) \in \mathcal{ST}_{n,m} (g,h)$. Since  $(z \widehat{f'} \ast \phi)=z(\phi \ast \widehat{f})'$,\ we have $z(\phi \ast \widehat{f})'\in \mathcal{ST}_{n,m} (g,h)$. Again, the Alexander relationship between the classes $\mathcal{ST}_{n,m} (g,h)$ and $\mathcal{CV}_{n,m} (g,h)$, implies that $\phi \ast \widehat{f} \in \mathcal{CV}_{n,m}(g,h)$. Since $\mathcal{CV}_{n,m}(z/(1-z),h)=\mathcal{CV}_{n,m}(h)$, the closure under convolution with pre-starlike functions of order $\alpha$ for the class $\mathcal{CV}_{n,m}(h)$ follows.
	\end{proof}
	
	A function $f\in \mathcal{A}$ is pre-starlike of order $0$ if $zf'=f\ast k_{0}= f\ast (z/(1-z)^2)$ is starlike, or $f$ is convex. Therefore, $\mathcal{R}_{0}=\mathcal{CV}$ is the class of convex functions. Similarly, $\mathcal{R}_{1/2}=\mathcal{ST}(1/2)$ is the class of starlike functions of order $1/2$. Assuming $h$ to be just a function with positive real part in Theorem~\ref{thm2.4}, implies that each of the classes $\mathcal{ST}_{n,m} (g,h),\ \mathcal{ST}_{n,m} (h),\ \mathcal{CV}_{n,m} (g,h),\ \mathcal{CV}_{n,m} (h)$  is closed under convolution with convex functions. The next section talks about the class of functions obtained with regards to the concept of symmetric points.
	
	\section{The Classes $\mathcal{ST S}_{n,m} (h)$ and $\mathcal{CV S}_{n,m} (h)$}
	Let $m,n$ be positive integers. By using the definition of the $n$-ply points function $F_n$ in \eqref{eqn2.2}  for $\widehat{f}\in \mathcal{A}^{m}$; the classes of starlike and convex function with respect to symmetric points related to $\widehat{f}$ and $F_{n}$,  $\mathcal{ST S}_{n,m} (h)$ and $\mathcal{CV S}_{n,m} (h)$ respectively are defined by
	\[\mathcal{ST S}_{n,m} (h)=\left\{\widehat{f} \in \mathcal{A}^{m}\ |\ \dfrac{2zf'_{k} (z)}{F_{n}(z)-F_{n}(-z)}\prec h(z)\right\},\]
	and
	\[\mathcal{CV S}_{n,m} (h)=\left\{\widehat{f} \in \mathcal{A}^{m}\ |\ \dfrac{(2zf'_{k})'(z)}{F'_{n}(z)+F'_{n}(-z)} \prec h(z)\right\},\]
	where $h$ is a convex univalent function with positive real part. For a fixed $g\in \mathcal{A}$, the class $\mathcal{ST S}_{n,m}(g,h)$ contains  all $\widehat{f}$ for which $\widehat{f} \ast g\in \mathcal{ST S}_{n,m}(h)$, and the class $\mathcal{CV S}_{n,m} (g,h)$ has all functions $\widehat{f}$ for which $\widehat{f}\ast g\in \mathcal{CV S}_{n,m} (h)$.
	
	From the above definitions, for $g(z)=z/(1-z)$,\ \  $\mathcal{ST S}_{n,m}(g,h)=\mathcal{ST S}_{n,m}(h)$, $\mathcal{CV S}_{n,m} (g,h)=\mathcal{CV S}_{n,m} (h)$ and $\mathcal{ST S}_{n,m} (z/(1-z)^2,h)=\mathcal{CV S}_{n,m} (h)$. Also, the Alexander relationship holds: $\widehat{f}\in \mathcal{CV S}_{n,m} (h)$ if and only if $z\widehat{f'}\in \mathcal{ST S}_{n,m}(h)$ and  $\widehat{f}\in \mathcal{CV S}_{n,m} (g,h)$ if and only if  $z\widehat{f'}\in \mathcal{ST S}_{n,m}(g,h)$.

	The following two theorems provide results on the starlikeness of the functions related to $\widehat{f}$ and the $n$-ply points function $F_{n}$ respectively.
	\begin{theorem}\label{thm3.1}
		If the function $h$ is a convex univalent function with positive real part satisfying $h(0)=1$, and $\widehat{f} \in  \mathcal{ST S}_{n,m}(h)$, then the function $\widehat{g}$ defined by $\widehat{g}(z)=(\widehat{f}(z)-\widehat{f}(-z) )/2$ belongs to the class $\mathcal{ST}_{n,m}(h)$.
	\end{theorem}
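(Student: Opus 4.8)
The plan is to reduce the subordination that $\widehat{g}$ must satisfy to the one that $\widehat{f}$ already satisfies, using nothing more than the convexity of $h(\mathbb{D})$. Write $g_k(z)=(f_k(z)-f_k(-z))/2$, set $G(z)=\sum_{k=1}^m\alpha_k g_k(z)$, and let $G_n$ be the $n$-ply points function attached to $G$ as in \eqref{eqn2.2}. First I would record two elementary identities. Summing the definition of $g_k$ against the weights $\alpha_k$ gives $G(z)=\tfrac12\bigl(F(z)-F(-z)\bigr)$; substituting this into \eqref{eqn2.2} and using $F_n(-z)=n^{-1}\sum_{j=0}^{n-1}\varepsilon^{-j}F(-\varepsilon^{j}z)$ (which is just \eqref{eqn2.2} with $z$ replaced by $-z$) produces the key identity
\[ G_n(z)=\tfrac12\bigl(F_n(z)-F_n(-z)\bigr). \]
Differentiating $g_k$ gives $zg_k'(z)=\tfrac{z}{2}\bigl(f_k'(z)+f_k'(-z)\bigr)$.

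Next I would assemble the quotient that has to be controlled. Combining the two computations,
\[ \frac{zg_k'(z)}{G_n(z)}=\frac{z\bigl(f_k'(z)+f_k'(-z)\bigr)}{F_n(z)-F_n(-z)}. \]
Because $F_n(z)-F_n(-z)$ is an odd function of $z$, writing $p_k(z):=2zf_k'(z)/\bigl(F_n(z)-F_n(-z)\bigr)$ and replacing $z$ by $-z$ gives $p_k(-z)=2zf_k'(-z)/\bigl(F_n(z)-F_n(-z)\bigr)$. Consequently the quotient splits neatly as
\[ \frac{zg_k'(z)}{G_n(z)}=\tfrac12\bigl(p_k(z)+p_k(-z)\bigr). \]

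Now I would invoke the hypothesis. By definition of $\mathcal{ST S}_{n,m}(h)$ we have $p_k\prec h$, so for every $z\in\mathbb{D}$ both $p_k(z)$ and $p_k(-z)$ lie in $h(\mathbb{D})$; since $h$ is convex univalent, $h(\mathbb{D})$ is a convex set and hence the midpoint $\tfrac12\bigl(p_k(z)+p_k(-z)\bigr)$ lies in $h(\mathbb{D})$ as well. At the origin, $p_k(0)=h(0)$, so the quotient takes the value $h(0)$ there. The subordination criterion for a univalent majorant (agreement at $0$ together with image contained in $h(\mathbb{D})$) then yields $zg_k'(z)/G_n(z)\prec h(z)$ for each $k=1,\dotsc,m$, which is exactly $\widehat{g}\in\mathcal{ST}_{n,m}(h)$.

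The only genuinely delicate point is the bookkeeping that establishes $G_n(z)=\tfrac12(F_n(z)-F_n(-z))$ and the clean split into $\tfrac12\bigl(p_k(z)+p_k(-z)\bigr)$; once these identities are in hand, the convexity step is immediate and parallels the use of convexity of $h(\mathbb{D})$ in the proof of Theorem~\ref{thm2.2}.
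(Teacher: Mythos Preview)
Your proof is correct and follows essentially the same route as the paper's: identify $G_n(z)=\tfrac12(F_n(z)-F_n(-z))$, note that the hypothesis gives $zf_k'(z)/G_n(z)\in h(\mathbb{D})$, replace $z$ by $-z$ using the oddness of $G_n$, and average the two values using convexity of $h(\mathbb{D})$. If anything, you are more explicit than the paper in verifying that the $n$-ply points function attached to $G=\sum_k\alpha_k g_k$ really coincides with $\tfrac12(F_n(z)-F_n(-z))$, which the paper's proof takes for granted.
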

	\begin{proof}
		Let $\widehat{f}\in \mathcal{ST S}_{n,m}(h)$ and for $F_{n}$ given by \eqref{eqn2.2}, the function $G_{n}:\mathbb{D}\rightarrow \mathbb{C}$ be defined by $G_{n}(z)=\left.\left(F_{n}(z)-F_{n}(-z)\right)\middle/2\right.$. By definition, $zf'_{k}(z)/G_{n}(z)\in h(\mathbb{D})$ for every $z\in \mathbb{D}$.  After replacing  $z$ by $-z$ it is seen that $zf'_{k}(-z)/G_{n}(z)\in h(\mathbb{D})$ for every $z\in \mathbb{D}$, as $G_{n}(-z)=-G_{n}(z)$. Using the convexity of the domain $h(\mathbb{D})$, it is inferred that
		\[ \dfrac{zg'_{k}(z) }{G_{n}(z)}=\dfrac{1}{2}\left(\dfrac{zf'_{k}(z)}{G_{n}(z)}+\dfrac{zf'_{k}(-z)}{G_{n}(z)}\right) \in h(\mathbb{D}),\]
		for every $z\in \mathbb{D}$. Thus, the definition of the class $\mathcal{ST}_{n,m}(h)$ gives $\widehat{g}\in \mathcal{ST}_{n,m}(h)$.
	\end{proof}
	\begin{theorem}\label{thm3.2}
		Let $h$ be a convex univalent function with positive real part satisfying $h(0)=1$ and define the function $G_{n}$ as in Theorem~\ref{thm3.1}. Then, for $\widehat{f}\in \mathcal{ST S}_{n,m}(h)$, the function $G_{n}\in \mathcal{ST}(h)$.
	\end{theorem}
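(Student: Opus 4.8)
The plan is to reduce this statement to the starlikeness result already obtained in Theorem~\ref{thm2.2}. The crucial observation is that $G_n$ is \emph{itself} the $n$-ply points function attached to the symmetrized tuple $\widehat{g}$ produced in Theorem~\ref{thm3.1}. Once this identification is in place, the conclusion is immediate: Theorem~\ref{thm3.1} gives $\widehat{g}\in\mathcal{ST}_{n,m}(h)$, and Theorem~\ref{thm2.2} asserts that the $n$-ply points function of any member of $\mathcal{ST}_{n,m}(h)$ belongs to $\mathcal{ST}(h)$.

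First I would set $g_k(z)=(f_k(z)-f_k(-z))/2$ and form $G(z)=\sum_{k=1}^{m}\alpha_k g_k(z)=(F(z)-F(-z))/2$, where $F$ is as in \eqref{eqn2.1}; note each $g_k$ is normalized, so $\widehat{g}\in\mathcal{A}^{m}$. The main step is to check that the $n$-ply transform \eqref{eqn2.2} applied to $G$ returns exactly $G_n$. Using linearity of the transform together with the identity
\[
\frac{1}{n}\sum_{j=0}^{n-1}\varepsilon^{-j}F(-\varepsilon^{j}z)=F_n(-z),
\]
a short computation gives
\[
\frac{1}{n}\sum_{j=0}^{n-1}\varepsilon^{-j}G(\varepsilon^{j}z)=\frac{1}{2}\bigl(F_n(z)-F_n(-z)\bigr)=G_n(z),
\]
so $G_n$ is precisely the function \eqref{eqn2.2} built from the tuple $\widehat{g}$.

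With this identification in hand, the argument closes in one line. By Theorem~\ref{thm3.1} the tuple $\widehat{g}$ lies in $\mathcal{ST}_{n,m}(h)$, and applying Theorem~\ref{thm2.2} to $\widehat{g}$ shows that its associated $n$-ply points function belongs to $\mathcal{ST}(h)$; since that function is $G_n$, we conclude $G_n\in\mathcal{ST}(h)$. The only nonroutine point is the bookkeeping in the previous paragraph, namely confirming that symmetrizing and then $n$-plying agrees with $n$-plying and then symmetrizing. This commutation is exactly what makes Theorem~\ref{thm2.2} applicable to $\widehat{g}$, and it is the place where I would concentrate the care; everything else is a direct appeal to the two earlier theorems.
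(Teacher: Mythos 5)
Your proposal is correct, and it takes a genuinely different route from the paper. The paper proves Theorem~\ref{thm3.2} by a direct computation that essentially repeats the proof of Theorem~\ref{thm2.2} with $G_n$ in the denominator: it forms the convex combination over $k$ to get $zF'(z)/G_n(z)\in h(\mathbb{D})$, rotates $z\mapsto\varepsilon^j z$ using $F_n(\varepsilon^j z)=\varepsilon^j F_n(z)$, averages over $j$ to obtain $zF_n'(z)/G_n(z)\in h(\mathbb{D})$, and finally symmetrizes over $\pm z$ using the oddness $G_n(-z)=-G_n(z)$ to reach $zG_n'(z)/G_n(z)\in h(\mathbb{D})$. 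You instead reduce the statement to results already on record: Theorem~\ref{thm3.1} places the symmetrized tuple $\widehat{g}$ in $\mathcal{ST}_{n,m}(h)$, and Theorem~\ref{thm2.2} then yields starlikeness of the $n$-ply points function of $\widehat{g}$, provided one checks that symmetrization commutes with the $n$-ply transform; your verification of this via $\frac{1}{n}\sum_{j=0}^{n-1}\varepsilon^{-j}F(-\varepsilon^{j}z)=F_n(-z)$ is correct, as is the remark that each $g_k$ is normalized so that $\widehat{g}\in\mathcal{A}^m$ (the tuple $\widehat{g}$ is understood with the same weights $\alpha_k$, so $G=\sum_k\alpha_k g_k$ is indeed the relevant function). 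Your route is shorter and more modular, and it makes explicit a bookkeeping fact the paper uses silently: the very conclusion $\widehat{g}\in\mathcal{ST}_{n,m}(h)$ of Theorem~\ref{thm3.1} already presupposes that $G_n$ is the $n$-ply points function attached to $\widehat{g}$, which is exactly your commutation identity. What the paper's longhand computation buys in exchange is a self-contained template whose intermediate step $zF_n'(z)/G_n(z)\in h(\mathbb{D})$ carries over, with $z\mapsto\overline{z}$ and conjugation in place of $z\mapsto -z$, to the conjugate and symmetric-conjugate cases (Theorem~\ref{thm4.2} and the omitted proofs of Section 5), whereas your reduction would need the analogous commutation identities to be checked in each of those settings as well.
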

	\begin{proof}
		Let $\widehat{f}\in \mathcal{ST S}_{n,m}(h)$, and $z\in \mathbb{D}$. Then by definition we have $zf'_{k} (z)/G_{n}(z) \in h(\mathbb{D})$. Since $\alpha_k$'s are non-negative real numbers with $\sum_{k=0}^{m} \alpha_k=1$, the convexity property of $h(\mathbb{D})$ gives $\sum_{k=1}^{m} \alpha_k zf'_{k} (z)/G_{n} (z) \in h(\mathbb{D})$ for each $z\in \mathbb{D}$, and the definition in \eqref{eqn2.1} infers $zF' (z)/G_{n}(z) \in h(\mathbb{D})$ for each $z\in \mathbb{D}$.
		
		Replacing $z$ by $\varepsilon ^{j} z$, and using the relation $F_{n}(\varepsilon ^{j} z)=\varepsilon ^{j} F_{n}(z)$ which in  it can be seen that
		\begin{equation}\label{eqn3.1}
			\dfrac{zF'(\varepsilon ^{j} z)}{G_{n}(z)} \in h(\mathbb{D})
		\end{equation}
		for each $z\in \mathbb{D}$.
		From the definition in \eqref{eqn2.2}, it also follows that $F'_{n}(z):=\sum_{j=0}^{n-1}(F'(\varepsilon^{j}z))/n$. Applying it together with the convexity of $h(\mathbb{D})$ on \eqref{eqn3.1} yields
		\[\dfrac{zF'_{n}(z)}{G_{n}(z)}=\dfrac{1}{n}\sum_{j=0}^{n-1}\dfrac{zF'(\varepsilon ^{j} z)}{G_{n}(z)} \in h(\mathbb{D}) \]
		for each $z\in \mathbb{D}$.
		The convex combination of the functions obtained after replacing $z$ by $-z$  gives \[\dfrac{zG'_{n}(z)}{G_{n}(z)}=
		\dfrac{1}{2}\left(\dfrac{zF'_{n}(z)}{G_{n}(z)}+\dfrac{zF'_{n}(-z)}{G_{n}(z)}\right)\in h(\mathbb{D})\]
		for each $z\in \mathbb{D}$, as $G_{n}(-z)=-G_{n}(z)$. Therefore $zG'_{n}(z)/G_{n}(z) \prec h(z)$ or $G_{n}\in \mathcal{ST}(h)$.
	\end{proof}
	
	The following theorem proves a containment relationship between the classes $\mathcal{CV S}_{n,m}(h) $ and $\mathcal{ST S}_{n,m}(h)$.
	\begin{theorem}
		If   $h$ is  a convex univalent function with positive real part satisfying $h(0)=1$, then   $\mathcal{CV S}_{n,m}(h) \subset \mathcal{ST S}_{n,m}(h)$.
	\end{theorem}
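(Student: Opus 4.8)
The plan is to follow the template of the proof of Theorem~\ref{thm2.3}, replacing $F_n$ by the symmetrized function $G_n$ and invoking Theorem~\ref{thm3.2} in place of Theorem~\ref{thm2.2}. Throughout, for $\widehat{f}\in\mathcal{CV S}_{n,m}(h)$ I would write $G_n(z)=(F_n(z)-F_n(-z))/2$ exactly as in Theorem~\ref{thm3.1}, so that $G_n'(z)=(F_n'(z)+F_n'(-z))/2$ and the defining subordination of $\mathcal{CV S}_{n,m}(h)$ takes the compact form $(zf_k')'(z)/G_n'(z)\prec h(z)$, since $(2zf_k')'(z)/(F_n'(z)+F_n'(-z))$ simplifies to this.

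The first step is to show that $G_n$ is starlike, which supplies the positive-real-part hypothesis needed afterwards. The Alexander relationship recorded after the definition of the classes gives $z\widehat{f'}\in\mathcal{ST S}_{n,m}(h)$. For the tuple $z\widehat{f'}$ the averaged function is $\sum_{k}\alpha_k zf_k'(z)=zF'(z)$, and a short computation with \eqref{eqn2.2}, using $F_n'(z)=\tfrac1n\sum_{j}F'(\varepsilon^{j}z)$, shows that its $n$-ply points function is $zF_n'(z)$; symmetrizing this then produces $zG_n'(z)$ as the function playing the role of $G_n$. Applying Theorem~\ref{thm3.2} to $z\widehat{f'}$ therefore yields $zG_n'\in\mathcal{ST}(h)$, and the Alexander relationship gives $G_n\in\mathcal{CV}(h)$. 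Since $h$ has positive real part, $G_n$ is convex and in particular starlike, so $\RE\bigl(zG_n'(z)/G_n(z)\bigr)>0$ on $\mathbb{D}$.

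The second step applies Theorem~\ref{thm1.2} with $S=G_n$ and $T=zf_k'$. Both functions vanish at the origin, so $S(0)=T(0)$; the inequality $\RE(zS'(z)/S(z))>0$ holds by the first step; and $T'(z)/S'(z)=(zf_k')'(z)/G_n'(z)\prec h(z)$ by the definition of $\mathcal{CV S}_{n,m}(h)$. Theorem~\ref{thm1.2} then delivers $T(z)/S(z)=zf_k'(z)/G_n(z)\prec h(z)$, which is precisely $2zf_k'(z)/(F_n(z)-F_n(-z))\prec h(z)$. Hence $\widehat{f}\in\mathcal{ST S}_{n,m}(h)$, establishing the containment.

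The only real obstacle is the bookkeeping in the first step: one must verify that the $n$-ply operator intertwines with the map $\widehat{f}\mapsto z\widehat{f'}$ so as to turn $G_n$ into $zG_n'$, and that $G_n$ inherits the rotational symmetry $G_n(\varepsilon^{j}z)=\varepsilon^{j}G_n(z)$ used in Theorem~\ref{thm3.2}. Both facts are immediate from the Taylor expansions, but they are the point where care is required. Alternatively, one can bypass the Alexander relationship altogether and re-run the averaging argument of Theorem~\ref{thm3.2} directly on the derivative data $(zf_k')'(z)/G_n'(z)\in h(\mathbb{D})$ to conclude that $G_n\in\mathcal{CV}(h)$; the second step then proceeds unchanged.
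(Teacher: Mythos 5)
Your proposal is correct and follows essentially the same route as the paper: rewrite the defining subordination as $(zf_k')'(z)/G_n'(z)\prec h(z)$, show $G_n\in\mathcal{CV}(h)$ (hence convex and in particular starlike, since $h$ has positive real part), and conclude with Theorem~\ref{thm1.2} applied to $T=zf_k'$ and $S=G_n$. The only difference is cosmetic: you establish $G_n\in\mathcal{CV}(h)$ by applying Theorem~\ref{thm3.2} to the tuple $z\widehat{f'}$ via the Alexander relationship, whereas the paper simply re-runs the averaging argument of Theorem~\ref{thm3.2} on the derivative data --- which is precisely the alternative you note at the end of your proposal.
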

	\begin{proof}
		Let the function $G_{n}:\mathbb{D}\rightarrow \mathbb{C}$ be defined as in Theorem~\ref{thm3.1}. For $\widehat{f} \in \mathcal{CV S}_{n,m} (h)$, it is known that $\left.\left((2zf'_{k})'(z)\right)\middle/\left(F'_{n}(z)+F'_{n}(-z)\right)\right.\prec h(z)$ for every $z\in \mathbb{D}$, or $(z f'_{k})'(z)/G_{n}'(z) \prec h(z)$ for every $z\in \mathbb{D}$. By arguments similar to the ones in Theorem~\ref{thm3.2}, it can be proved that $G_{n}\in \mathcal{CV}(h)$. Since $h$ is a function with positive real part, it follows that $G_{n}$ is convex, and hence starlike. By Theorem~\ref{thm1.2}, $z f'_{k}(z)/G_{n}(z) \prec h(z)$ for every $z\in \mathbb{D}$. Thus, by definition, $\widehat{f} \in \mathcal{ST S}_{n,m}(h)$.
	\end{proof}	
	In the next theorem, it is shown that the classes $\mathcal{ST S}_{n,m}(g,h)$ and $\mathcal{CV S}_{n,m}(g,h)$ are closed under convolution with the class of pre-starlike functions of order $\alpha$, where $h$ is a convex univalent function with real part greater than $\alpha$.
	\begin{theorem}
		Let $g$ be a fixed function in $\mathcal{A}$. Let $h$ be a convex univalent function satisfying $\RE h(z)>\alpha, h(0)=1,\ 0\leqslant \alpha < 1$. Then the classes $\mathcal{ST S}_{n,m}(g,h)$ and $\mathcal{CV S}_{n,m}(g,h)$ are closed under convolution with pre-starlike functions of order $\alpha$.
	\end{theorem}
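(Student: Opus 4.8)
The plan is to follow the template of the proof of Theorem~\ref{thm2.4}, replacing the $n$-ply function $F_n$ by the symmetrized function $G_n(z)=(F_n(z)-F_n(-z))/2$ throughout, and invoking Theorem~\ref{thm3.2} in place of Theorem~\ref{thm2.2}. As in the earlier argument, it suffices to first establish closure for the base class $\mathcal{ST S}_{n,m}(h)=\mathcal{ST S}_{n,m}(z/(1-z),h)$; the passage to $\mathcal{ST S}_{n,m}(g,h)$ then follows from the associativity of convolution, and the convex class $\mathcal{CV S}_{n,m}(g,h)$ is handled through the Alexander relationship recorded after the definition.

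For the core step I would fix $\widehat{f}\in\mathcal{ST S}_{n,m}(h)$ and $\phi\in\mathcal{R}_\alpha$, and set $H_k(z):=zf'_k(z)/G_n(z)$, so that $H_k\prec h$ by hypothesis. Theorem~\ref{thm3.2} gives $G_n\in\mathcal{ST}(h)$, and since $\RE h(z)>\alpha$ this improves to $G_n\in\mathcal{ST}(\alpha)$. Applying Ruscheweyh's Theorem~\ref{thm1.1} with this $H_k$, $G_n$, and $\phi$, together with the convexity of $h$, yields
\[ \frac{(\phi\ast(H_k G_n))(z)}{(\phi\ast G_n)(z)}\prec h(z). \]
The goal is then to identify the left-hand side with the defining quotient of $\mathcal{ST S}_{n,m}(h)$ for the convolved tuple $\phi\ast\widehat{f}$, namely $2z(\phi\ast f_k)'(z)/((\phi\ast F)_n(z)-(\phi\ast F)_n(-z))$.

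The heart of the matter, and the step I expect to require the most care, is this final identification. The numerator is routine: since $H_k G_n=zf'_k$, one has $\phi\ast(H_k G_n)=\phi\ast zf'_k=z(\phi\ast f_k)'$. The denominator requires checking that convolution commutes both with the $n$-ply averaging and with the reflection $z\mapsto -z$; at the level of Taylor coefficients one verifies $\phi\ast(F(\varepsilon^j\cdot))=(\phi\ast F)(\varepsilon^j\cdot)$ and $\phi\ast(F_n(-\cdot))=(\phi\ast F_n)(-\cdot)$, whence $\phi\ast G_n=((\phi\ast F)_n(z)-(\phi\ast F)_n(-z))/2$. Once these commutation identities are in hand, the quotient matches the required expression, giving $\phi\ast\widehat{f}\in\mathcal{ST S}_{n,m}(h)$.

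Finally, for the fixed-$g$ classes I would argue exactly as in Theorem~\ref{thm2.4}: if $\widehat{f}\in\mathcal{ST S}_{n,m}(g,h)$ then $\widehat{f}\ast g\in\mathcal{ST S}_{n,m}(h)$, and associativity gives $(\widehat{f}\ast\phi)\ast g=(\widehat{f}\ast g)\ast\phi\in\mathcal{ST S}_{n,m}(h)$, so $\widehat{f}\ast\phi\in\mathcal{ST S}_{n,m}(g,h)$. For $\mathcal{CV S}_{n,m}(g,h)$ I would pass through the Alexander relationship: $\widehat{f}\in\mathcal{CV S}_{n,m}(g,h)$ gives $z\widehat{f'}\in\mathcal{ST S}_{n,m}(g,h)$; the starlike case just proved yields $z\widehat{f'}\ast\phi=z(\phi\ast\widehat{f})'\in\mathcal{ST S}_{n,m}(g,h)$; and applying the Alexander relationship once more returns $\phi\ast\widehat{f}\in\mathcal{CV S}_{n,m}(g,h)$, completing the proof.
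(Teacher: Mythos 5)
Your proposal is correct and follows essentially the same route as the paper's own proof: reduce to the base class $\mathcal{ST S}_{n,m}(h)$ via $G_n$ and $H_k$, apply Theorem~\ref{thm3.2} and Ruscheweyh's Theorem~\ref{thm1.1}, then handle $\mathcal{ST S}_{n,m}(g,h)$ by associativity and $\mathcal{CV S}_{n,m}(g,h)$ by the Alexander relationship. The only difference is cosmetic: you spell out the coefficient-level verification of the identity $\phi\ast G_n=\bigl((\phi\ast F)_n(z)-(\phi\ast F)_n(-z)\bigr)/2$, which the paper simply records as equation~\eqref{eqn3.2} without proof.
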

	\begin{proof}
		The class $\mathcal{ST S}_{n,m}(h)$ is closed under convolution with pre-starlike functions of order $\alpha$ is proved first; that is, it will be shown that $f\ast \phi \in \mathcal{ST S}_{n,m}(h)$ for every $f\in \mathcal{ST S}_{n,m}(h)$ and every $\phi \in \mathcal{R}_{\alpha}$. For $k=1,2\ldots ,m$ and  $n\geqslant 1$, the functions $G_{n}$ and $H_{k}$ are defined on the unit disc as
		\[G_{n}(z):=\dfrac{F_{n}(z)-F_{n}(-z)}{2}, \quad \quad H_{k}(z):=\dfrac{zf'_{k}(z)}{G_{n}(z)}.\]
		It can be seen from Theorem~\ref{thm3.2} that $G_{n}\in \mathcal{ST}(h)$ and since $\RE h(z)>\alpha,$\ \ $G_{n}\in \mathcal{ST} (\alpha)$.\\
		It is observed that
		\begin{equation}\label{eqn3.2}
			\dfrac{(\phi \ast (H_{k} G_{n}))(z)}{(\phi \ast G_{n})(z)} = \dfrac{2z(\phi \ast f_{k})'(z)}{(\phi \ast F)_{n}(z)-(\phi \ast F)_{n}(-z)}
		\end{equation}
		for $z\in \mathbb{D}$. Theorem~\ref{thm1.1} yields
		\[\dfrac{(\phi \ast (H_{k} G_{n}))(z)}{(\phi \ast G_{n})(z)} \subset \overline{co}(H_{k}(\mathbb{D})),\]
		and $H_{k}(z)\prec h(z)$ . This implies that
		\[\dfrac{2z(\phi \ast f_{k})'(z)}{(\phi \ast F)_{n}(z)-(\phi \ast F_{n})(-z)} \prec h(z)\] or $ \phi \ast \widehat{f} \in \mathcal{ST S}_{n,m} (h)$.
		Thus, the class $\mathcal{ST S}_{n,m}(h)$ is   closed under convolution with functions in $\mathcal{R}_{\alpha}$.
		
		Let $\widehat{f} \in \mathcal{ST S}_{n,m} (g,h)$. Then by definition, $\widehat{f} \ast g\in \mathcal{ST S}_{n,m}(h)$. Using the associative property of convolution and the closure of the class $\mathcal{ST S}_{n,m} (h)$ under convolution with functions in $\mathcal{R}_{\alpha}$, it follows that $ (\widehat{f} \ast \phi  )\ast g= (\widehat{f} \ast g )\ast \phi \in \mathcal{ST S}_{n,m} (h) $ where $\phi\in \mathcal{R}_\alpha$. Therefore, $\widehat{f} \ast \phi \in \mathcal{ST S}_{n,m}(g,h)$, and thus proves that $\mathcal{ST S}_{n,m}(g,h)$ is closed under convolution with functions in the class $\mathcal{R}_{\alpha}$.
		
		Let $\widehat{f} \in \mathcal{CV S}_{n,m} (g,h)$. Then, by the Alexander relationship between the  classes $\mathcal{ST S}_{n,m} (g,h)$ and $\mathcal{CV S}_{n,m} (g,h)$, we have   $  z\widehat{f'} \in \mathcal{ST S}_{n,m}(g,h)$. By using what is just proved for $\mathcal{ST S}_{n,m} (g,h)$, it yields that $(z \widehat{f'} \ast \phi) \in \mathcal{ST S}_{n,m} (g,h)$, and since $(z \widehat{f'} \ast \phi)=z(\phi \ast \widehat{f})'$, it follows that $z(\phi \ast \widehat{f})'\in \mathcal{ST S}_{n,m} (g,h)$. Again, the Alexander relationship between the classes $\mathcal{ST S}_{n,m} (g,h)$ and $\mathcal{CV S}_{n,m} (g,h)$ implies that $\phi \ast \widehat{f} \in \mathcal{CV S}_{n,m}(g,h)$. This proves that the class $\mathcal{CV S}_{n,m}(g,h)$ is closed under convolution with functions in $\mathcal{R}_{\alpha}$. Since $\mathcal{CV S}_{n,m}(z/(1-z),h)=\mathcal{CV S}_{n,m}(h)$, the closure under convolution with functions in the class of pre-starlike functions of order $\alpha$ for the class $\mathcal{CV S}_{n,m}(h)$ follows directly.\end{proof}
	
	In the forthcoming sections, we define and discuss the corresponding classes of functions by considering conjugate and symmetric conjugate points, respectively.
	
	\section{The Classes $\mathcal{ST C}_{n,m} (h)$ and $\mathcal{CV C}_{n,m}(h)$}
	Let $m\ \text{and}\ n$ be positive integers, and $h$ be a convex univalent function with positive real part  which is symmetric with respect to the real-axis. The classes $\mathcal{ST C}_{n,m} (h)$  and  $\mathcal{CV C}_{n,m}(h)$ of starlike and convex functions with respect to conjugate points related to $\widehat{f}$ and $F_{n}$ consist of all functions $\widehat{f}\in \mathcal{A}^{m}$ satisfying the subordinations
	\[\dfrac{2zf'_{k} (z)}{F_{n}(z)+\overline{F_{n}(\overline{z})}}\prec h(z)\quad \text{ and }\quad  \dfrac{(2zf'_{k})'(z)}{F'_{n}(z)+\overline{F_{n}' (\overline{z})}} \prec h(z)\]
	respectively, where the function $F_{n}$ is defined in \eqref{eqn2.2}.
	For a fixed $g\in \mathcal{A}$, the class $\mathcal{ST C}_{n,m}(g,h)$ contains all functions $\widehat{f}\in \mathcal{A}^{m}$ for which $\widehat{f} \ast g\in \mathcal{ST C}_{n,m}(h)$, and\ $\mathcal{CV C}_{n,m} (g,h)$ consists of all functions $\widehat{f}\in \mathcal{A}^{m}$ for which $\widehat{f}\ast g\in \mathcal{CV C}_{n,m}(h)$.

	If $g(z)=z/(1-z)$, then $\mathcal{ST C}_{n,m}(g,h)=\mathcal{ST C}_{n,m}(h)$, $\mathcal{CV C}_{n,m} (g,h)=\mathcal{CV C}_{n,m} (h)$ and $\mathcal{ST C}_{n,m} (z/(1-z)^2,h)=\mathcal{CV C}_{n,m} (h)$. Furthermore, the following Alexander relationship holds: $\widehat{f}\in \mathcal{CV C}_{n,m} (h)$ if and only if $z\widehat{f'}\in \mathcal{ST C}_{n,m}(h)$ and  $\widehat{f}\in \mathcal{CV C}_{n,m} (g,h)$ if and only if  $z\widehat{f'}\in \mathcal{ST C}_{n,m}(g,h)$.
	
	The next two theorems study the starlikeness property of the functions relating to $\widehat{f}\in \mathcal{A}^{m}$ and the $n$-ply points function $F_{n}$.
	\begin{theorem}\label{thm4.1}
		If $h$ is a convex univalent function with positive real part  which is symmetric with respect to the real axis satisfying $h(0)=1$, and the function $\widehat{g}$ is defined by $\widehat{g}(z)=\left. \left(\widehat{f}(z)+\overline{\widehat{f}(\overline{z})}\right)\middle / 2\right.$, then $\widehat{f} \in  \mathcal{ST C}_{n,m}(h)\ \text{implies}\  \widehat{g}\in \mathcal{ST}_{n,m}(h)$.
	\end{theorem}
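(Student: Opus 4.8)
The plan is to follow the proof of Theorem~\ref{thm3.1}, replacing the point reflection $z\mapsto -z$ used there by complex conjugation $z\mapsto\overline{z}$. For $F_n$ as in \eqref{eqn2.2}, I would introduce the function $G_n:\mathbb{D}\to\mathbb{C}$ defined by $G_n(z):=(F_n(z)+\overline{F_n(\overline{z})})/2$. A short computation, using $\overline{\varepsilon^{j}z}=\varepsilon^{-j}\overline{z}$ together with the reindexing $j\mapsto n-j$, identifies $G_n$ as the $n$-ply points function associated with $G:=\sum_{k=1}^{m}\alpha_k g_k=(F(z)+\overline{F(\overline{z})})/2$; hence $G_n$ is exactly the denominator that must appear in the membership condition for $\widehat{g}\in\mathcal{ST}_{n,m}(h)$. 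By the definition of $\mathcal{ST C}_{n,m}(h)$, the hypothesis $\widehat{f}\in\mathcal{ST C}_{n,m}(h)$ says that $zf'_k(z)/G_n(z)\in h(\mathbb{D})$ for every $z\in\mathbb{D}$ and each $k$.

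The crux of the proof is to derive the conjugate companion of this relation, namely $z\,\overline{f'_k(\overline{z})}/G_n(z)\in h(\mathbb{D})$. To this end I would apply the hypothesis at the point $\overline{z}$, obtaining $\overline{z}\,f'_k(\overline{z})/G_n(\overline{z})\in h(\mathbb{D})$, and then take complex conjugates. Because $G_n$ has real Taylor coefficients, one has $\overline{G_n(\overline{z})}=G_n(z)$; and because $h$ is symmetric with respect to the real axis, $\overline{h(\mathbb{D})}=h(\mathbb{D})$. These two observations turn the conjugated relation precisely into $z\,\overline{f'_k(\overline{z})}/G_n(z)\in h(\mathbb{D})$.

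Finally, differentiating $g_k(z)=(f_k(z)+\overline{f_k(\overline{z})})/2$ gives $g'_k(z)=(f'_k(z)+\overline{f'_k(\overline{z})})/2$, so that
\[
\frac{zg'_k(z)}{G_n(z)}=\frac{1}{2}\left(\frac{zf'_k(z)}{G_n(z)}+\frac{z\,\overline{f'_k(\overline{z})}}{G_n(z)}\right).
\]
Both terms on the right lie in the convex set $h(\mathbb{D})$, so their average does as well, giving $zg'_k(z)/G_n(z)\in h(\mathbb{D})$ for every $z\in\mathbb{D}$. Since the left-hand side is analytic and equals $h(0)=1$ at the origin while $h$ is univalent, this containment is equivalent to the subordination $zg'_k(z)/G_n(z)\prec h(z)$, i.e.\ $\widehat{g}\in\mathcal{ST}_{n,m}(h)$. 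I expect the reflection step of the second paragraph to be the only delicate point: it is the sole place where the new hypothesis that $h$ is symmetric about the real axis enters, and it also requires the verification that $G_n$ has real coefficients, so that $\overline{G_n(\overline{z})}=G_n(z)$.
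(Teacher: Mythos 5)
Your proof is correct and follows essentially the same route as the paper's: define $G_n(z)=(F_n(z)+\overline{F_n(\overline{z})})/2$, obtain $z\,\overline{f'_k(\overline{z})}/G_n(z)\in h(\mathbb{D})$ by evaluating the hypothesis at $\overline{z}$ and conjugating (using $\overline{G_n(\overline{z})}=G_n(z)$ and the symmetry of $h$), then average by convexity of $h(\mathbb{D})$. The only difference is that you explicitly verify, via the reindexing $j\mapsto n-j$, that $G_n$ coincides with the $n$-ply points function of $G=\sum_k\alpha_k g_k$ --- a step the paper leaves implicit when it invokes the definition of $\mathcal{ST}_{n,m}(h)$ --- which is a welcome extra precision rather than a different argument.
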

	\begin{proof}
		Let $\widehat{f}\in \mathcal{ST C}_{n,m}(h)$. Define $G_{n}:\mathbb{D}\rightarrow \mathbb{C}$ by $G_{n}(z)=(F_{n}(z)+\overline{F_{n}(\overline{z}))})/2$, where $F_{n}$ is given by \eqref{eqn2.2}. Then by definition, $zf'_{k}(z)/G_{n}(z)\in h(\mathbb{D})$ for every $z\in \mathbb{D}$. Replacing $z$ by $\overline{z}$, taking the conjugate of the resulting expression and then using the facts that $G_{n}(z)=\overline{G_{n}(\overline{z})}$ and $h$ is symmetric with respect to the real axis, the expression becomes $z\overline{f'_{k} (\overline{z})}/G_{n}(z)\in h(\mathbb{D})$ for every $z\in \mathbb{D}$. The convexity property of  $h(\mathbb{D})$ gives
		\[\dfrac{zg'_{k}(z) }{G_{n}(z)}=\dfrac{1}{2}\left(\dfrac{zf'_{k}(z)}{G_{n}(z)}+\dfrac{z\overline{f'_{k} (\overline{z})}}{G_{n}(z)}\right) \in h(\mathbb{D})\]
		for every $z\in \mathbb{D}$.
		Thus, by the definition of the class $\mathcal{ST}_{n,m}(h)$, it follows that $\widehat{g}\in \mathcal{ST}_{n,m}(h)$.
	\end{proof}
	
	\begin{theorem}\label{thm4.2}
		Let $h$ be a convex univalent function with positive real part  which is symmetric with respect to the real-axis satisfying $h(0)=1$ and the function $G_{n}$ be defined as in Theorem~\ref{thm4.1}. If $\widehat{f}\in \mathcal{ST C}_{n,m}(h)$, then $G_{n}\in \mathcal{ST}(h)$.
	\end{theorem}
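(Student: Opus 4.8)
The plan is to mirror the proof of Theorem~\ref{thm3.2}, with the reflection $z\mapsto -z$ replaced by the conjugation $z\mapsto\overline{z}$ and with the hypothesis that $h$ is symmetric with respect to the real axis playing the role that the odd symmetry of $G_n$ played there. First I would record the two structural identities satisfied by $G_n(z)=(F_n(z)+\overline{F_n(\overline{z})})/2$: namely $\overline{G_n(\overline{z})}=G_n(z)$ (already noted in the proof of Theorem~\ref{thm4.1}) and $G_n(\varepsilon^j z)=\varepsilon^j G_n(z)$. The latter follows from $F_n(\varepsilon^j z)=\varepsilon^j F_n(z)$ together with $\overline{\varepsilon^j}=\varepsilon^{-j}$, since then $\overline{F_n(\overline{\varepsilon^j z})}=\overline{F_n(\varepsilon^{-j}\overline{z})}=\overline{\varepsilon^{-j}F_n(\overline{z})}=\varepsilon^{j}\overline{F_n(\overline{z})}$, and adding $F_n(\varepsilon^j z)=\varepsilon^j F_n(z)$ gives the claim.

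Starting from $\widehat{f}\in\mathcal{ST C}_{n,m}(h)$, the definition gives $zf'_k(z)/G_n(z)\in h(\mathbb{D})$ for each $k$ and each $z\in\mathbb{D}$. Since the $\alpha_k$ are nonnegative with $\sum_{k=1}^m\alpha_k=1$ and $h(\mathbb{D})$ is convex, forming the convex combination and invoking \eqref{eqn2.1} yields $zF'(z)/G_n(z)\in h(\mathbb{D})$. Replacing $z$ by $\varepsilon^j z$ and cancelling the common factor $\varepsilon^j$ by means of $G_n(\varepsilon^j z)=\varepsilon^j G_n(z)$, I obtain $zF'(\varepsilon^j z)/G_n(z)\in h(\mathbb{D})$ for every $j$. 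Averaging over $j=0,\dots,n-1$, using $F'_n(z)=n^{-1}\sum_{j=0}^{n-1}F'(\varepsilon^j z)$ and once more the convexity of $h(\mathbb{D})$, then gives $zF'_n(z)/G_n(z)\in h(\mathbb{D})$.

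The one genuinely new step is the treatment of the conjugate term. Writing $G'_n(z)=(F'_n(z)+\overline{F'_n(\overline{z})})/2$, I would replace $z$ by $\overline{z}$ in $zF'_n(z)/G_n(z)\in h(\mathbb{D})$, take complex conjugates, and use both $\overline{G_n(\overline{z})}=G_n(z)$ and the symmetry of $h(\mathbb{D})$ about the real axis (so that $\overline{h(\mathbb{D})}=h(\mathbb{D})$) to conclude that $z\overline{F'_n(\overline{z})}/G_n(z)\in h(\mathbb{D})$. A final convex combination,
\[ \frac{zG'_n(z)}{G_n(z)}=\frac{1}{2}\left(\frac{zF'_n(z)}{G_n(z)}+\frac{z\overline{F'_n(\overline{z})}}{G_n(z)}\right)\in h(\mathbb{D}), \]
establishes $zG'_n(z)/G_n(z)\prec h(z)$, that is, $G_n\in\mathcal{ST}(h)$.

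I expect the main obstacle to be the bookkeeping in this last step: keeping track of exactly where the conjugation acts and confirming that the real-axis symmetry of $h$ is precisely what is needed to keep the conjugated quotient inside $h(\mathbb{D})$, a role filled in Theorem~\ref{thm3.2} by the oddness $G_n(-z)=-G_n(z)$. Everything else is a faithful transcription of the symmetric-points argument, so the verification of the two identities for $G_n$ and the one conjugation manipulation are where the care is required.
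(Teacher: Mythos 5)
Your proposal is correct and follows essentially the same route as the paper's proof: convex combination over $k$, rotation by $\varepsilon^j$ and averaging to get $zF_n'(z)/G_n(z)\in h(\mathbb{D})$, then conjugation using $\overline{G_n(\overline{z})}=G_n(z)$ and the real-axis symmetry of $h$, and a final convex combination. In fact you are slightly more careful than the paper at one point: you state and prove the identity $G_n(\varepsilon^j z)=\varepsilon^j G_n(z)$ needed for the rotation step, whereas the paper only cites $F_n(\varepsilon^j z)=\varepsilon^j F_n(z)$ and leaves that (easy) verification implicit.
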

	\begin{proof}
		Let $f\in \mathcal{ST C}_{n,m}(h)$. Then,\ $zf'_{k} (z)/G_{n}(z)\in h(\mathbb{D})$ for every $z\in \mathbb{D}$. Since $\alpha_k$'s are non-negative real numbers with $\sum_{k=0}^{m} \alpha_k=1$, with the convexity property of $h(\mathbb{D})$ it is seen that $\sum_{k=1}^{m} \alpha_k zf'_{k} (z)/G_{n} (z)\in h(\mathbb{D})$ for every $z\in \mathbb{D}$. Using \eqref{eqn2.1} it is simplified to $zF' (z)/G_{n}(z) \in h(\mathbb{D})$ for each $z\in \mathbb{D}$.
		
		By \eqref{eqn2.2}, it is already observed in the proof of Theorem~\ref{thm2.2} that $F_{n}(\varepsilon ^{j} z)=\varepsilon ^{j} F_{n}(z)$; using this identity after replacing $z$ by $\varepsilon ^{j} z$, the expression becomes
		\begin{equation}\label{eqn4.1}
			\dfrac{zF'(\varepsilon ^{j} z)}{G_{n}(z)} \in h(\mathbb{D}).
		\end{equation}
		for each $z\in \mathbb{D}$. By using $F'_{n}(z):=\sum_{j=0}^{n-1}(F'(\varepsilon^{j}z))/n$, which is again a consequence of \eqref{eqn2.2}, along with the convexity of $h(\mathbb{D})$ in \eqref{eqn4.1}, it gives
		\[\dfrac{zF'_{n}(z)}{G_{n}(z)}=\dfrac{1}{n}\sum_{j=0}^{n-1}\dfrac{zF'(\varepsilon ^{j} z)}{G_{n}(z)} \in h(\mathbb{D})\]
		for every $z\in \mathbb{D}$.
		Replacing $z$ by $\overline{z}$, taking conjugate of the resulting expresssion and using the facts that $G_{n}(z)=\overline{G_{n}(\overline{z})}$ and that $h$ is symmetric with respect to the real axis yields
		\begin{equation}\label{eqn4.2}
			\dfrac{z\overline{F'_{n} (\overline{z})}}{G_{n}(z)}\in h(\mathbb{D})
		\end{equation}
		for each $z\in \mathbb{D}$. By taking the convex combination of \eqref{eqn4.1} and \eqref{eqn4.2}, we see that
		\[\dfrac{zG'_{n}(z)}{G_{n}(z)}=\dfrac{1}{2}\dfrac{z(F'_{n}(z)+\overline{F'_{n} (\overline{z})})}{G_{n}(z)}\in h(\mathbb{D})\] or in other words, $zG'_{n}(z)/G_{n}(z) \prec h(z)$  for every $z\in \mathbb{D}$. Thus, it follows that $G_{n}\in \mathcal{ST}(h)$.
	\end{proof}
	
	The following theorem gives the containment result for the classes $\mathcal{CV C}_{n,m}(h)$ and $\mathcal{ST C}_{n,m}(h)$.
	\begin{theorem}
		If   $h$ is  a convex univalent function with positive real part  which is symmetric with respect to the real-axis satisfying $h(0)=1$, then   $\mathcal{CV C}_{n,m}(h) \subset \mathcal{ST C}_{n,m}(h)$.
	\end{theorem}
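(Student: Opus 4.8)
The plan is to follow the template of the starlike--convex containment already established in Theorem~\ref{thm2.3} and in its symmetric-points analogue, adapting the conjugate-point machinery of Theorems~\ref{thm4.1} and \ref{thm4.2}. Throughout, let $G_n(z)=(F_n(z)+\overline{F_n(\overline z)})/2$ be the function attached to $\widehat f$ as in Theorem~\ref{thm4.1}. The first step is essentially notational: since the operation $z\mapsto\overline{F_n(\overline z)}$ is itself analytic in $z$, with Taylor coefficients the conjugates of those of $F_n$, it commutes with differentiation, so that $G_n'(z)=(F_n'(z)+\overline{F_n'(\overline z)})/2$. Consequently, for $\widehat f\in\mathcal{CV C}_{n,m}(h)$ the defining subordination $(2zf_k')'(z)/(F_n'(z)+\overline{F_n'(\overline z)})\prec h(z)$ is precisely $(zf_k')'(z)/G_n'(z)\prec h(z)$.

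The crux is to show that $G_n\in\mathcal{CV}(h)$, that is, $1+zG_n''(z)/G_n'(z)\prec h(z)$. I would establish this in either of two equivalent ways. The direct route repeats the computation of Theorem~\ref{thm4.2} one derivative higher: forming the convex combination $\sum_k\alpha_k (zf_k')'(z)/G_n'(z)=(zF')'(z)/G_n'(z)\in h(\mathbb{D})$, then exploiting the rotation identity $F_n(\varepsilon^j z)=\varepsilon^j F_n(z)$ (which yields $F_n'(z)=n^{-1}\sum_j F'(\varepsilon^j z)$) together with the reflection symmetry, using $G_n(z)=\overline{G_n(\overline z)}$ and the assumed symmetry of $h$ about the real axis, to symmetrize and arrive at $(zG_n')'(z)/G_n'(z)\in h(\mathbb{D})$. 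The shorter route invokes the Alexander relationship of this section: $\widehat f\in\mathcal{CV C}_{n,m}(h)$ forces $z\widehat{f'}\in\mathcal{ST C}_{n,m}(h)$, and the function $G_n$ attached to $z\widehat{f'}$ is exactly $zG_n'$ (because its associated $F$ is $zF'$ and its associated $F_n$ is $zF_n'$), so Theorem~\ref{thm4.2} applied to $z\widehat{f'}$ gives $zG_n'\in\mathcal{ST}(h)$, which is equivalent to $G_n\in\mathcal{CV}(h)$.

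With $G_n\in\mathcal{CV}(h)$ in hand, the hypothesis that $h$ has positive real part gives $\mathcal{CV}(h)\subset\mathcal{CV}$, so $G_n$ is convex and in particular starlike; hence $\RE(zG_n'(z)/G_n(z))>0$. I would then finish by applying Theorem~\ref{thm1.2} with $S=G_n$ and $T=zf_k'$: both vanish at the origin so $S(0)=T(0)$, the starlikeness of $G_n$ supplies $\RE(zS'(z)/S(z))>0$, and the reformulated hypothesis supplies $T'(z)/S'(z)=(zf_k')'(z)/G_n'(z)\prec h(z)$. The conclusion $T(z)/S(z)=zf_k'(z)/G_n(z)\prec h(z)$ is exactly the defining subordination of $\mathcal{ST C}_{n,m}(h)$, whence $\widehat f\in\mathcal{ST C}_{n,m}(h)$.

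I expect the main obstacle to be the verification that $G_n\in\mathcal{CV}(h)$ with the bookkeeping correct: one must simultaneously track the $n$-ply rotation structure and the conjugation/reflection, and in particular confirm that differentiation commutes with $z\mapsto\overline{F_n(\overline z)}$ so that the denominator in the $\mathcal{CV C}$-condition really equals $2G_n'$. Once that identity and the symmetry $G_n(z)=\overline{G_n(\overline z)}$ are secured, the remainder is a faithful transcription of the arguments in Theorems~\ref{thm4.2} and \ref{thm1.2}.
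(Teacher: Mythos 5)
Your proposal is correct and follows essentially the same route as the paper: reformulate the $\mathcal{CV C}_{n,m}(h)$ condition as $(zf_k')'(z)/G_n'(z)\prec h(z)$, show $G_n\in\mathcal{CV}(h)$ by the argument of Theorem~\ref{thm4.2}, deduce starlikeness of $G_n$ from the positivity of $\RE h$, and finish with Theorem~\ref{thm1.2}. Your alternative derivation of $G_n\in\mathcal{CV}(h)$ via the Alexander relationship (applying Theorem~\ref{thm4.2} to $z\widehat{f'}$ and noting its attached function is $zG_n'$) is a tidy shortcut for the step the paper only sketches, but it does not change the overall architecture.
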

	\begin{proof}
		Let $\widehat{f} \in \mathcal{CV C}_{n,m} (h)$ and define the function $G_{n}:\mathbb{D} \rightarrow \mathbb{C}$ as in Theorem~\ref{thm4.1}. Then $(z f'_{k})'(z)/G_{n}'(z) \prec h(z)$ for each $z\in \mathbb{D}$. By a process similar to the one in Theorem~\ref{thm4.2}, it can be concluded that $G_{n}\in \mathcal{CV}(h)$. Since $h$ is given to be a function with positive real part, it follows that $G_{n}$ is convex and hence starlike. An application of Theorem~\ref{thm1.2} gives  $z f'_{k}(z)/G_{n}(z) \prec h(z)$ for each $z\in \mathbb{D}$, that is $\widehat{f} \in \mathcal{ST C}_{n,m}(h)$.
	\end{proof}	
	In the following theorem, a result related to closure under convolution with functions in the class $\mathcal{R}_\alpha$, for the functions in $\mathcal{ST C}_{n,m}(g,h)$ and other related classes is discussed.
	
	\begin{theorem}
		Let $g$ be a fixed function in $\mathcal{A}$ and let $h$ be a convex univalent function which is symmetric with respect to the real-axis satisfying $\RE h(z)>\alpha, h(0)=1,\ 0\leqslant \alpha < 1$. Then the classes $\mathcal{ST C}_{n,m}(g,h)$ and $\mathcal{CV C}_{n,m}(g,h)$ are closed under convolution with the functions in the class of pre-starlike functions of order $\alpha$ having real coefficients.
	\end{theorem}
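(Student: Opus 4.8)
The plan is to mirror the structure of the convolution theorem already established for the symmetric-points classes, the only essential new ingredient being that the conjugation operation $F\mapsto \overline{F(\overline{z})}$ must commute with convolution, which is precisely why the pre-starlike multiplier $\phi$ is required to have real coefficients. I would first prove that $\mathcal{ST C}_{n,m}(h)$ is closed under convolution with $\phi\in\mathcal{R}_\alpha$ having real coefficients, and then bootstrap to $\mathcal{ST C}_{n,m}(g,h)$ and $\mathcal{CV C}_{n,m}(g,h)$ using associativity of convolution and the Alexander relation exactly as in the earlier proofs.

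For the base case, fix $\widehat{f}\in\mathcal{ST C}_{n,m}(h)$ and $\phi\in\mathcal{R}_\alpha$ with real coefficients. Define $G_{n}(z)=(F_{n}(z)+\overline{F_{n}(\overline{z})})/2$ and $H_{k}(z)=zf'_{k}(z)/G_{n}(z)$, so that $H_{k}\prec h$ by hypothesis. By Theorem~\ref{thm4.2} we have $G_{n}\in\mathcal{ST}(h)$, and since $\RE h(z)>\alpha$ this upgrades to $G_{n}\in\mathcal{ST}(\alpha)$. The heart of the argument is the convolution identity
\[
\frac{(\phi\ast(H_{k}G_{n}))(z)}{(\phi\ast G_{n})(z)}=\frac{2z(\phi\ast f_{k})'(z)}{(\phi\ast F)_{n}(z)+\overline{(\phi\ast F)_{n}(\overline{z})}}.
\]
Once this is in place, Theorem~\ref{thm1.1} gives $(\phi\ast(H_{k}G_{n}))/(\phi\ast G_{n})\subset\overline{co}(H_{k}(\mathbb{D}))$, and since $H_{k}\prec h$ with $h$ convex the right-hand side is subordinate to $h$; this says precisely that $\phi\ast\widehat{f}\in\mathcal{ST C}_{n,m}(h)$.

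To establish the identity I would verify numerator and denominator separately. The numerator uses $H_{k}G_{n}=zf'_{k}$ together with the standard relation $\phi\ast(zp')=z(\phi\ast p)'$ to give $\phi\ast(H_{k}G_{n})=z(\phi\ast f_{k})'$. For the denominator, the commutation $\phi\ast F_{n}=(\phi\ast F)_{n}$ follows termwise from $\phi\ast(F(\varepsilon^{j}\cdot))=(\phi\ast F)(\varepsilon^{j}\cdot)$. The delicate point, and the sole place where reality of the coefficients of $\phi$ is used, is the identity $\phi\ast\overline{F_{n}(\overline{z})}=\overline{(\phi\ast F_{n})(\overline{z})}$: writing $\phi(z)=z+\sum_{k\geqslant2}c_{k}z^{k}$ with $c_{k}\in\mathbb{R}$ and $F_{n}(z)=\sum a_{k}z^{k}$, both sides have $k$th Taylor coefficient $c_{k}\overline{a_{k}}$, whereas without reality the right-hand side would instead carry $\overline{c_{k}}\,\overline{a_{k}}$. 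Combining the two relations yields $\phi\ast G_{n}=((\phi\ast F)_{n}(z)+\overline{(\phi\ast F)_{n}(\overline{z})})/2$, which is exactly the $G_{n}$-function attached to $\phi\ast\widehat{f}$, so the identity follows.

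The remaining steps are formal and I expect no difficulty. For $\widehat{f}\in\mathcal{ST C}_{n,m}(g,h)$ one has $\widehat{f}\ast g\in\mathcal{ST C}_{n,m}(h)$, so by the base case and associativity $(\widehat{f}\ast\phi)\ast g=(\widehat{f}\ast g)\ast\phi\in\mathcal{ST C}_{n,m}(h)$, whence $\widehat{f}\ast\phi\in\mathcal{ST C}_{n,m}(g,h)$. For the convex class, the Alexander relation $\widehat{f}\in\mathcal{CV C}_{n,m}(g,h)\iff z\widehat{f'}\in\mathcal{ST C}_{n,m}(g,h)$ together with $z\widehat{f'}\ast\phi=z(\phi\ast\widehat{f})'$ transfers the starlike result to $\mathcal{CV C}_{n,m}(g,h)$. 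The main obstacle is thus concentrated entirely in the coefficient computation behind the conjugation–convolution commutation; everything else is a faithful transcription of the symmetric-points proof.
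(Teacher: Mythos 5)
Your proposal is correct and follows essentially the same route as the paper's proof: the same functions $G_{n}$ and $H_{k}$, the appeal to Theorem~\ref{thm4.2} and Ruscheweyh's Theorem~\ref{thm1.1} for the base class $\mathcal{ST C}_{n,m}(h)$, then associativity of convolution for the $(g,h)$ class and the Alexander relation for the convex class. The only difference is that you explicitly verify the coefficient identity $\phi\ast\overline{F_{n}(\overline{z})}=\overline{(\phi\ast F_{n})(\overline{z})}$ where reality of the coefficients of $\phi$ enters, a step the paper merely asserts.
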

	\begin{proof}
		First, the statement of the theorem is proved to be true for the class $\mathcal{ST C}_{n,m}(h)$, that is, it is shown that the functions in $\mathcal{ST C}_{n,m}(h)$ are closed under convolution with pre-starlike functions of order $\alpha$ with real coefficients. Let $\widehat{f}\in \mathcal{ST C}_{n,m}(h)$ and $\phi \in \mathcal{R}_{\alpha}$ has real coefficients. For $k=1,2\ldots ,m$ define the functions $G_{n}$ and $H_{k}$ on $\mathbb{D}$ as
		\[G_{n}(z):=\dfrac{F_{n}(z)+\overline{F_{n}(\overline{z})}}{2}, \quad \quad H_{k}(z):=\dfrac{zf'_{k}(z)}{C_{n}(z)}.\]
		By Theorem~\ref{thm4.2}, $G_{n}\in \mathcal{ST}(h)$, and since $\RE h(z)>\alpha$,\ \ $G_{n}\in \mathcal{ST}(\alpha)$.  Theorem~\ref{thm1.1} gives
		\begin{equation}\label{eqn4.3}
			\dfrac{(\phi \ast (H_{k} G_{n}))(z)}{(\phi \ast G_{n})(z)} \subset \overline{co}(H_{k}(\mathbb{D})).
		\end{equation}
		The fact that $\phi$ has real coefficients provides
		\begin{equation}\label{eqn4.4}
			\dfrac{(\phi \ast (H_{k} G_{n}))(z)}{(\phi \ast G_{n})(z)}=  \dfrac{2z(\phi \ast f_{k})'(z)}{(\phi \ast F)_{n}(z)+\overline{(\phi \ast F)_{n}(\overline{z})}}
		\end{equation}
		for every $z\in \mathbb{D}$. Thus, from \eqref{eqn4.3} and \eqref{eqn4.4} together with the fact that $H_{k}(z)\prec h(z)$, it can be inferred that
		\[\dfrac{2z(\phi \ast f_{k})'(z)}{(\phi \ast F)_{n}(z)+\overline{(\phi \ast F)_{n}(\overline{z})}} \prec h(z)\]
		which implies that $\widehat{f}\ast \phi \in \mathcal{ST C}_{n,m}(h)$. It proves that the class $\mathcal{ST C}_{n,m}(h)$  is closed under convolution with functions in the class $\mathcal{R}_{\alpha}$ having real coefficients.
		
		For $\widehat{f} \in \mathcal{ST C}_{n,m} (g,h)$, we have  $\widehat{f} \ast g\in \mathcal{ST C}_{n,m}(h)$. The associative property of convolution gives $ (\widehat{f} \ast \phi  )\ast g= (\widehat{f} \ast g )\ast \phi$ where $\phi\in \mathcal{R}_\alpha$ with real coefficients, and the fact that the class $\mathcal{ST C}_{n,m} (h)$ is closed under convolution with pre-starlike functions of order $\alpha$ with real coefficients infers that $(\widehat{f} \ast \phi  )\ast g \in \mathcal{ST C}_{n,m} (h)$. Therefore,  $\widehat{f} \ast \phi \in \mathcal{ST C}_{n,m}(g,h)$.
		
		For $\widehat{f} \in \mathcal{CV C}_{n,m} (g,h)$, it is shown that $\phi \ast \widehat{f} \in \mathcal{CV C}_{n,m}(g,h)$. Using the Alexander relationship between the  classes $\mathcal{ST C}_{n,m} (g,h)$ and $\mathcal{CV C}_{n,m} (g,h)$, it is inferred that $  z\widehat{f'} \in \mathcal{ST C}_{n,m}(g,h)$. Using the result proved above for $\mathcal{ST C}_{n,m} (g,h)$, it follows that $(z \widehat{f'} \ast \phi) \in \mathcal{ST C}_{n,m} (g,h)$ where $\phi \in \mathcal{R}_{\alpha}$ with real coefficients. As $(z \widehat{f'} \ast \phi)=z(\phi \ast \widehat{f})'$,\ so  $z(\phi \ast \widehat{f})'\in \mathcal{ST C}_{n,m} (g,h)$. Again, the Alexander relationship between the  classes $\mathcal{ST C}_{n,m} (g,h)$ and $\mathcal{CV C}_{n,m} (g,h)$  gives $\phi \ast \widehat{f} \in \mathcal{CV C}_{n,m}(g,h)$, proving that the class $\mathcal{CV C}_{n,m}(g,h)$ is closed under convolution with pre-starlike functions of order $\alpha$ with real coefficients. The result that the class $\mathcal{CV C}_{n,m}(h)$ is closed under convolution with functions in $\mathcal{R}_{\alpha}$ having real coefficients follows since $\mathcal{CV C}_{n,m}(z/(1-z),h)=\mathcal{CV C}_{n,m}(h)$.
	\end{proof}

	\section{The Classes $\mathcal{ST SC}_{n,m} (h)$ and $\mathcal{CV SC}_{n,m} (h)$}
	For positive integers $m$ and $n$, consider the $n$-ply function $F_n$ defined in \eqref{eqn2.2} and the function $h$, a convex univalent function with positive real part  which is symmetric with respect to the real-axis. Using all these functions, the class $\mathcal{ST SC}_{n,m} (h)$ of starlike functions with respect to symmetric conjugate points related to $\widehat{f}$ and $F_{n}$ is defined by
	\[\mathcal{ST C}_{n,m} (h)=\left\{\widehat{f} \in \mathcal{A}^{m}\ |\ \dfrac{2zf'_{k} (z)}{F_{n}(z)-\overline{F_{n}(\overline{-z})}}\prec h(z)\right\}.\]
	The class $\mathcal{ST SC}_{n,m}(g,h)$ has all those $\widehat{f}$ for which $\widehat{f} \ast g\in \mathcal{ST SC}_{n,m}(h)$. The class $\mathcal{CV SC}_{n,m} (h)$ of convex function with respect to symmetric conjugate points related to $\widehat{f}$ and $F_{n}$ consists of all $\widehat{f}\in \mathcal{A}^{m}$ satisfying the subordination
	\[\dfrac{(2zf'_{k})'(z)}{F'_{n}(z)+\overline{F_{n}' (\overline{-z})}} \prec h(z).\]
	The class $\mathcal{CV SC}_{n,m} (g,h)$ consists of all $\widehat{f}\in \mathcal{A}^{m}$ for which $\widehat{f}\ast g\in \mathcal{CV SC}_{n,m} (h)$.
	
	It can be easily inferred that $\mathcal{ST SC}_{n,m}(z/(1-z),h)=\mathcal{ST SC}_{n,m}(h)$, $\mathcal{CV SC}_{n,m} (z/(1-z),h)=\mathcal{CV SC}_{n,m} (h)$ and $\mathcal{ST SC}_{n,m} (z/(1-z)^2,h)=\mathcal{CV SC}_{n,m} (h)$. Also, the following version of Alexander's theorem holds: $\widehat{f}\in \mathcal{CV SC}_{n,m} (h)$ if and only if $z\widehat{f'}\in \mathcal{ST SC}_{n,m}(h)$ and  $\widehat{f}\in \mathcal{CV SC}_{n,m} (g,h)$ if and only if  $z\widehat{f'}\in \mathcal{ST SC}_{n,m}(g,h)$.
	
	By combining the techniques used to prove the results in Section $3$ and Section $4$, the corresponding results for the class $\mathcal{ST SC}_{n,m} (h)$ and other related classes can be  proved and their proofs are omitted.
	
	\begin{theorem}
		Let the function $h$ is a convex univalent function with positive real part which is symmetric with respect to the real-axis satisfying $h(0)=1$, also define the function $\widehat{g}$ by $\widehat{g}(z)= (\widehat{f}(z)-\overline{\widehat{f}(\overline{-z})})/2$. If $\widehat{f} \in  \mathcal{ST SC}_{n,m}(h),\ \text{then}\  \widehat{g}\in \mathcal{ST}_{n,m}(h)$.
	\end{theorem}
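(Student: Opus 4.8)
The plan is to combine the reflection argument from the symmetric-points Theorem~\ref{thm3.1} with the conjugation argument from the conjugate-points Theorem~\ref{thm4.1}. Denote the components of $\widehat{g}$ by $g_k(z)=(f_k(z)-\overline{f_k(\overline{-z})})/2$ and set $G_n(z)=(F_n(z)-\overline{F_n(\overline{-z})})/2$. Before the main estimate I would record two structural facts. First, $G_n$ is exactly the $n$-ply points function of $G:=\sum_{k=1}^{m}\alpha_k g_k=(F(z)-\overline{F(\overline{-z})})/2$: the operation $F\mapsto (F(z)-\overline{F(\overline{-z})})/2$ commutes with the averaging operator $\Lambda_n[F](z)=n^{-1}\sum_{j=0}^{n-1}\varepsilon^{-j}F(\varepsilon^{j} z)$, which one checks using $\overline{\varepsilon^{-j}}=\varepsilon^{j}$ together with the reindexing $j\mapsto -j\pmod n$. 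Second, the reflection identity $\overline{G_n(\overline{-z})}=-G_n(z)$ holds, directly from the definition of $G_n$. A quick leading-coefficient check also shows each $g_k\in\mathcal{A}$, so that $\widehat{g}\in\mathcal{A}^{m}$ and the conclusion is meaningful.

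With these in place, I would start from the defining membership of $\mathcal{ST SC}_{n,m}(h)$, namely $zf'_{k}(z)/G_n(z)\in h(\mathbb{D})$ for all $z\in\mathbb{D}$. The pivotal step is to replace $z$ by $\overline{-z}$ and then take complex conjugates. The numerator factor $\overline{-z}$ conjugates to $-z$, the derivative becomes $\overline{f'_{k}(\overline{-z})}$, and the denominator becomes $\overline{G_n(\overline{-z})}=-G_n(z)$ by the reflection identity, so the two minus signs cancel. Since $h$ is symmetric with respect to the real axis, $h(\mathbb{D})$ is invariant under conjugation, whence the conjugated value remains in $h(\mathbb{D})$. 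This produces $z\overline{f'_{k}(\overline{-z})}/G_n(z)\in h(\mathbb{D})$ for every $z\in\mathbb{D}$.

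Differentiating $g_k(z)=(f_k(z)-\overline{f_k(\overline{-z})})/2$ gives $g'_{k}(z)=(f'_{k}(z)+\overline{f'_{k}(\overline{-z})})/2$, so $zg'_{k}(z)/G_n(z)$ is the arithmetic mean of the two quantities already shown to lie in $h(\mathbb{D})$. The convexity of $h(\mathbb{D})$ then forces this mean into $h(\mathbb{D})$ as well, and because $G_n$ is the $n$-ply points function of $G$ by the first structural fact, this is precisely the assertion $\widehat{g}\in\mathcal{ST}_{n,m}(h)$.

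I expect the only real difficulty to be bookkeeping rather than anything conceptual: one must track the interplay of the sign coming from the symmetric reflection $z\mapsto -z$ and the conjugation coming from the conjugate reflection, and confirm that they combine to leave the target quantity in $h(\mathbb{D})$. Verifying the reflection identity $\overline{G_n(\overline{-z})}=-G_n(z)$ and the commutativity with $\Lambda_n$ is exactly where a sign slip would be easiest to make; beyond that, the argument is the routine convex-combination reasoning already used in the proofs of Theorems~\ref{thm3.1} and~\ref{thm4.1}.
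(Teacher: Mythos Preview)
Your proposal is correct and follows exactly the route the paper indicates: the paper omits the proof, stating only that it is obtained by combining the techniques of Theorems~\ref{thm3.1} and~\ref{thm4.1}, which is precisely the reflection-plus-conjugation argument you carry out. Your additional bookkeeping---verifying $\overline{G_n(\overline{-z})}=-G_n(z)$, computing $g'_k$, and checking that $G_n$ is the $n$-ply points function of $G$---is all correct and fills in exactly the details the paper leaves to the reader.
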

	
	\begin{theorem}
		Let $h$ be a convex univalent function with positive real part  which is symmetric with respect to the real-axis satisfying $h(0)=1$, and the function $G_{n}:\mathbb{D}\rightarrow \mathbb{C}$ be defined by $G_{n}(z)= (F_{n}(z)-\overline{F_n(\overline{-z})}) /2 $. Then, for $\widehat{f}\in \mathcal{ST SC}_{n,m}(h)$, the function $ G_{n}\in \mathcal{ST}(h)$.
	\end{theorem}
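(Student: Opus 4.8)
The plan is to follow the proofs of Theorems~\ref{thm3.2} and \ref{thm4.2} almost verbatim, the only genuinely new work being the verification of the symmetry properties of the symmetric conjugate function $G_n(z)=(F_n(z)-\overline{F_n(\overline{-z})})/2$. I would begin by rewriting the reflected term analytically: setting $\tilde F_n(z):=\overline{F_n(\overline z)}$, the analytic function whose Taylor coefficients are the conjugates of those of $F_n$, one has $\overline{F_n(\overline{-z})}=\tilde F_n(-z)$, so that $G_n(z)=\tfrac12(F_n(z)-\tilde F_n(-z))$ is analytic and normalised, whence $G_n\in\mathcal{A}$. From this representation I would extract two identities: the rotational equivariance $G_n(\varepsilon^{\,j}z)=\varepsilon^{\,j}G_n(z)$, obtained exactly as the identity $F_n(\varepsilon^{\,j}z)=\varepsilon^{\,j}F_n(z)$ used in Theorem~\ref{thm2.2} together with $\overline{\varepsilon^{\,j}}=\varepsilon^{-j}$ and $\overline{-z}=-\overline z$, and the symmetric conjugate reflection $\overline{G_n(\overline{-z})}=-G_n(z)$. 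Getting these two identities right is the only delicate point, because it is where the negation, the conjugation and the $n$-ply rotation have to be untangled simultaneously.

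With the identities in hand the averaging is routine. Since $\widehat{f}\in\mathcal{ST SC}_{n,m}(h)$ gives $zf'_k(z)/G_n(z)\in h(\mathbb{D})$ for every $k$ and every $z$, the convexity of $h(\mathbb{D})$ and $\sum_k\alpha_k=1$ together with \eqref{eqn2.1} yield $zF'(z)/G_n(z)\in h(\mathbb{D})$. Replacing $z$ by $\varepsilon^{\,j}z$ and cancelling the factor $\varepsilon^{\,j}$ by means of $G_n(\varepsilon^{\,j}z)=\varepsilon^{\,j}G_n(z)$ gives $zF'(\varepsilon^{\,j}z)/G_n(z)\in h(\mathbb{D})$; averaging over $j$ using $F'_n(z)=n^{-1}\sum_{j=0}^{n-1}F'(\varepsilon^{\,j}z)$ and convexity produces
\[\frac{zF'_n(z)}{G_n(z)}=\frac1n\sum_{j=0}^{n-1}\frac{zF'(\varepsilon^{\,j}z)}{G_n(z)}\in h(\mathbb{D}).\]

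Finally I would symmetrise in the conjugate-reflection direction. Differentiating the representation of $G_n$ gives $G_n'(z)=\tfrac12(F_n'(z)+\overline{F_n'(\overline{-z})})$, so the goal reduces to showing that the second term $z\overline{F_n'(\overline{-z})}/G_n(z)$ also lies in $h(\mathbb{D})$. For this I would start from the membership $zF'_n(z)/G_n(z)\in h(\mathbb{D})$ just proved, replace $z$ by $\overline{-z}$, take complex conjugates, and simplify using $\overline{G_n(\overline{-z})}=-G_n(z)$ and the hypothesis that $h(\mathbb{D})$ is symmetric about the real axis (so that $\overline{h(\mathbb{D})}=h(\mathbb{D})$); the two sign changes cancel, leaving $z\overline{F'_n(\overline{-z})}/G_n(z)\in h(\mathbb{D})$. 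A convex combination then gives
\[\frac{zG_n'(z)}{G_n(z)}=\frac12\left(\frac{zF'_n(z)}{G_n(z)}+\frac{z\overline{F'_n(\overline{-z})}}{G_n(z)}\right)\in h(\mathbb{D})\]
for all $z\in\mathbb{D}$. Because $G_n$ is normalised, the left-hand side equals $h(0)=1$ at the origin, and $h$ is convex univalent, so the pointwise containment upgrades to the subordination $zG_n'(z)/G_n(z)\prec h(z)$, that is $G_n\in\mathcal{ST}(h)$. As indicated, the substantive obstacle is confined to the two equivariance/reflection identities for $G_n$; once they are established the argument is identical in spirit to Theorem~\ref{thm4.2}.
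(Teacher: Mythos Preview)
Your proposal is correct and follows precisely the route the paper indicates (combining the averaging over $k$ and $j$ from Theorem~\ref{thm3.2} with the conjugation-and-reflection step from Theorem~\ref{thm4.2}); the paper in fact omits the proof entirely, merely remarking that these two techniques are to be combined. Your explicit verification of the identities $G_n(\varepsilon^{\,j}z)=\varepsilon^{\,j}G_n(z)$ and $\overline{G_n(\overline{-z})}=-G_n(z)$, together with the sign bookkeeping in the final symmetrisation, fills in exactly what the paper leaves to the reader.
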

	
	\begin{theorem}
		If   $h$ is a convex univalent function with positive real part which is symmetric with respect to the real-axis satisfying $h(0)=1$, then   $\mathcal{CV SC}_{n,m}(h) \subset \mathcal{ST SC}_{n,m}(h)$.
	\end{theorem}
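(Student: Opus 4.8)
The plan is to follow the same three-step template used in the containment theorems of Sections~2, 3 and 4, now assembling the symmetric and the conjugate ingredients simultaneously. First I would fix $\widehat{f}\in\mathcal{CV SC}_{n,m}(h)$ and introduce the auxiliary function $G_{n}(z):=\bigl(F_{n}(z)-\overline{F_{n}(\overline{-z})}\bigr)/2$, which is exactly the function appearing in the starlikeness theorem just stated for this class. A short Taylor-series computation shows that differentiation commutes with the reflection-conjugation $z\mapsto\overline{-z}$ in the sense that $G_{n}'(z)=\bigl(F_{n}'(z)+\overline{F_{n}'(\overline{-z})}\bigr)/2$, so that the defining subordination of $\mathcal{CV SC}_{n,m}(h)$ can be rewritten as $(zf_{k}')'(z)/G_{n}'(z)\prec h(z)$ for each $1\leqslant k\leqslant m$.

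The second and principal step is to prove that $G_{n}\in\mathcal{CV}(h)$. Here I would reproduce the reasoning of Theorems~\ref{thm3.2} and~\ref{thm4.2}: averaging the subordinations $(zf_{k}')'(z)/G_{n}'(z)\prec h(z)$ over $k$ with the non-negative weights $\alpha_{k}$ and invoking the convexity of $h(\mathbb{D})$ reduces matters to $F$; the root-of-unity identity $F_{n}(\varepsilon^{j}z)=\varepsilon^{j}F_{n}(z)$ together with $F_{n}'(z)=n^{-1}\sum_{j=0}^{n-1}F'(\varepsilon^{j}z)$ handles the $n$-ply averaging, and the symmetry of $h$ about the real axis is what permits the conjugated term $\overline{F_{n}'(\overline{-z})}$ to be kept inside $h(\mathbb{D})$ after passing to $z\mapsto\overline{-z}$. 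A final convex combination of the two resulting memberships yields $(zG_{n}')'(z)/G_{n}'(z)\in h(\mathbb{D})$ for every $z\in\mathbb{D}$, that is $G_{n}\in\mathcal{CV}(h)$.

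With $G_{n}\in\mathcal{CV}(h)$ in hand, the remainder is routine. Since $h$ has positive real part we have $\mathcal{CV}(h)\subset\mathcal{CV}$, so $G_{n}$ is convex and in particular starlike, giving $\RE\bigl(zG_{n}'(z)/G_{n}(z)\bigr)>0$. I would then apply Theorem~\ref{thm1.2} with $S=G_{n}$ and $T=zf_{k}'$: one checks that $S(0)=T(0)=0$, that the real-part hypothesis just noted holds, and that $T'(z)/S'(z)=(zf_{k}')'(z)/G_{n}'(z)\prec h(z)$ by the first step. The conclusion $T(z)/S(z)=zf_{k}'(z)/G_{n}(z)\prec h(z)$ is precisely the defining subordination of $\mathcal{ST SC}_{n,m}(h)$, whence $\widehat{f}\in\mathcal{ST SC}_{n,m}(h)$.

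I expect the main obstacle to lie in the second step, specifically in the bookkeeping that tracks how the reflection-and-conjugation $z\mapsto\overline{-z}$ interacts with the root-of-unity averaging defining $F_{n}$. One must verify both that $\overline{F_{n}(\overline{-z})}$ is genuinely analytic in $z$ (so that $G_{n}\in\mathcal{A}$ and the Miller--Mocanu theorem applies) and that the symmetry condition $\overline{h(\overline{w})}=h(w)$ is used correctly, so that conjugating a point of $h(\mathbb{D})$ returns a point of $h(\mathbb{D})$; everything else is a direct transcription of the arguments already carried out for symmetric points and for conjugate points, which is why the proof may reasonably be omitted once the template is in place.
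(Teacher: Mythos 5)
Your proposal is correct and is precisely the argument the paper intends: the paper omits this proof, stating that it follows by combining the techniques of Sections~3 and~4, and your three steps (rewriting the $\mathcal{CV SC}_{n,m}(h)$ condition as $(zf_k')'(z)/G_n'(z)\prec h(z)$ via the identity $G_n'(z)=\bigl(F_n'(z)+\overline{F_n'(\overline{-z})}\bigr)/2$, establishing $G_n\in\mathcal{CV}(h)$ by the $\alpha_k$-averaging, root-of-unity, and conjugation-symmetry arguments, then invoking Theorem~\ref{thm1.2}) are exactly that combination, matching the template of the containment theorems in Sections~2--4. The details you flag as delicate (analyticity of $\overline{F_n(\overline{-z})}$, the sign flip under $z\mapsto\overline{-z}$, and the use of $\overline{h(\overline{w})}=h(w)$) all check out.
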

	
	\begin{theorem}
		Let $g$ be a fixed function in $\mathcal{A}$ and let $h$ be a convex univalent function which is symmetric with respect to the real-axis satisfying $\RE(h(z))>\alpha, h(0)=1,\ 0\leqslant \alpha < 1$. Then the classes $\mathcal{ST SC}_{n,m}(g,h)$ and $\mathcal{CV SC}_{n,m}(g,h)$ are closed under convolution with pre-starlike functions of order $\alpha$ having real coefficients.
	\end{theorem}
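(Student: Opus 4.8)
The plan is to follow the template of the conjugate-points convolution theorem in Section~4, first proving closure for the base class $\mathcal{ST SC}_{n,m}(h)$ and then bootstrapping to the $(g,h)$ and convex classes. So I would fix $\widehat{f}\in\mathcal{ST SC}_{n,m}(h)$ and a pre-starlike function $\phi\in\mathcal{R}_\alpha$ with real coefficients, and set
\[ G_{n}(z):=\frac{F_{n}(z)-\overline{F_{n}(\overline{-z})}}{2},\qquad H_{k}(z):=\frac{zf'_{k}(z)}{G_{n}(z)}. \]
By the theorem of this section asserting $G_{n}\in\mathcal{ST}(h)$ (the symmetric-conjugate analogue of Theorem~\ref{thm4.2}), and since $\RE h(z)>\alpha$, this upgrades to $G_{n}\in\mathcal{ST}(\alpha)$. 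An application of Theorem~\ref{thm1.1} then gives $(\phi\ast(H_{k}G_{n}))/(\phi\ast G_{n})\subset\overline{co}(H_{k}(\mathbb{D}))$, and because $H_{k}\prec h$ with $h$ convex, this containment is the subordination $(\phi\ast(H_{k}G_{n}))/(\phi\ast G_{n})\prec h$.

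The crux is to rewrite this quotient in the form defining $\mathcal{ST SC}_{n,m}(h)$, that is, to verify
\[ \frac{(\phi\ast(H_{k}G_{n}))(z)}{(\phi\ast G_{n})(z)} = \frac{2z(\phi\ast f_{k})'(z)}{(\phi\ast F)_{n}(z)-\overline{(\phi\ast F)_{n}(\overline{-z})}}. \]
The numerator is routine, since $\phi\ast(H_{k}G_{n})=\phi\ast(zf'_{k})=z(\phi\ast f_{k})'$. The denominator is where the hypothesis that $\phi$ has real coefficients becomes indispensable, and I expect this to be the main obstacle. Writing $\phi(z)=\sum_{j\geqslant1}b_{j}z^{j}$ with $b_{1}=1$ and each $b_{j}\in\mathbb{R}$, convolution by $\phi$ commutes with the $n$-ply operation $F\mapsto F_{n}$, giving $\phi\ast F_{n}=(\phi\ast F)_{n}$, and, crucially, with the symmetric-conjugate operation. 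Setting $\widetilde{F}(z):=\overline{F_{n}(\overline{-z})}$ and $F_{n}(z)=\sum c_{j}z^{j}$, one computes $\widetilde{F}(z)=\sum(-1)^{j}\overline{c_{j}}z^{j}$, so that $(\phi\ast\widetilde{F})(z)=\sum(-1)^{j}b_{j}\overline{c_{j}}z^{j}$; on the other hand $\overline{(\phi\ast F_{n})(\overline{-z})}=\sum(-1)^{j}\overline{b_{j}}\,\overline{c_{j}}z^{j}$, and these two series agree precisely because every $b_{j}$ is real. Combining the two commutation identities turns $\phi\ast G_{n}$ into $((\phi\ast F)_{n}(z)-\overline{(\phi\ast F)_{n}(\overline{-z})})/2$, which yields the displayed equality and hence $\widehat{f}\ast\phi\in\mathcal{ST SC}_{n,m}(h)$.

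The remaining cases follow formally, exactly as in Sections~3 and 4. For $\widehat{f}\in\mathcal{ST SC}_{n,m}(g,h)$ one has $\widehat{f}\ast g\in\mathcal{ST SC}_{n,m}(h)$, so associativity of convolution gives $(\widehat{f}\ast\phi)\ast g=(\widehat{f}\ast g)\ast\phi\in\mathcal{ST SC}_{n,m}(h)$ for every $\phi\in\mathcal{R}_{\alpha}$ with real coefficients, whence $\widehat{f}\ast\phi\in\mathcal{ST SC}_{n,m}(g,h)$. For the convex class I would pass through the Alexander relationship: if $\widehat{f}\in\mathcal{CV SC}_{n,m}(g,h)$ then $z\widehat{f'}\in\mathcal{ST SC}_{n,m}(g,h)$, the starlike case just proved gives $z\widehat{f'}\ast\phi=z(\phi\ast\widehat{f})'\in\mathcal{ST SC}_{n,m}(g,h)$, and Alexander's relationship returns $\phi\ast\widehat{f}\in\mathcal{CV SC}_{n,m}(g,h)$; the base class $\mathcal{CV SC}_{n,m}(h)$ is then recovered by specializing to $g(z)=z/(1-z)$.
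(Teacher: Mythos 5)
Your proposal is correct and is essentially the proof the paper intends: the paper omits it, stating only that it follows by combining the techniques of Sections~3 and~4, and your argument is exactly that combination — the Section~4 template (Ruscheweyh's Theorem~\ref{thm1.1} applied to $G_{n}\in\mathcal{ST}(\alpha)$ and $H_{k}$, with the real-coefficient hypothesis making $\phi$ commute with the conjugation operation) adapted to the symmetric-conjugate function $G_{n}(z)=(F_{n}(z)-\overline{F_{n}(\overline{-z})})/2$, followed by the standard associativity and Alexander-relationship bootstrap. Your explicit coefficient verification that $\phi\ast\overline{F_{n}(\overline{-z})}=\overline{(\phi\ast F_{n})(\overline{-z})}$ precisely when the $b_{j}$ are real is the key point the paper leaves implicit, and you have it right.
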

	
	\begin{remark}
		For $m=1$, the four classes $\mathcal{ST}_{n,m} (g,h)$, $\mathcal{ST S}_{n,m} (g,h)$, $\mathcal{ST C}_{n,m}(g,h)$ and $\mathcal{ST SC}_{n,m}(g,h)$ reduce to the classes discussed in \cite{ravi1}. For $m=n=1,\text{ and } g(z)=z/(1-z)$ these classes reduce to those in \cite{ravi2}. For $m=1$, these become the classes studied in \cite{RosAbeerRavi} with $p=1$.
		
	\end{remark}

\noindent\textbf{Acknowledgment.} The authors are thankful to the referee for the comments to improve the readability of the paper.


\begin{thebibliography}{99}	
		
		\bibitem{amiri}H. S. Al-Amiri, On the Hadamard products of schlicht functions and applications, Internat. J. Math. Math. Sci. {\bf 8} (1985), no.~1, 173--177.
		
		\bibitem{HAlmCoMoc}H. Al-Amiri, D. Coman\ and\ P. T. Mocanu, Some properties of starlike functions with respect to symmetric-conjugate points, Internat. J. Math. Math. Sci. {\bf 18} (1995), no.~3, 469--474.
		
		\bibitem{RosAbeerRavi}R. M. Ali, A. O. Badghaish\ and\ V. Ravichandran, Multivalent functions with respect to $n$-ply points and symmetric conjugate points, Comput. Math. Appl. {\bf 60} (2010), no.~11, 2926--2935.
		
		\bibitem{RosMahRaviSub}R. M. Ali, M. N. Mahnaz, V Ravichandran\ and\ K. G. Subramanian, Convolution properties of classes of analytic and meromorphic functions, J. Inequal. Appl. {\bf 2010}, Art. ID 385728, 14 pp.
		
		
		\bibitem{BerKellogg}R. W. Barnard\ and\ C. Kellogg, Applications of convolution operators to problems in univalent function theory, Michigan Math. J. {\bf 27} (1980), no.~1, 81--94.
		
		
		
		\bibitem{ElashDK} R. Md. El-Ashwah\ and\ D. K. Thomas, Some subclasses of close-to-convex functions, J. Ramanujan Math. Soc. {\bf 2} (1987), no.~1, 85--100.
		
		
		\bibitem{Goch}P. Gochhayat and A. Prajapati,  Geometric properties on $(j,k)$-symmetric functions related to starlike and
		convex function,  Commun. Korean Math. Soc. to appear.
		\texttt{https://doi.org/10.4134/CKMS.c210072}
		
		
		
		\bibitem{kasi}M. S. Kasi, On the radius of univalence of certain regular functions, Indian J. Pure Appl. Math. {\bf 24} (1993), no.~3, 189--191.
		
		\bibitem{MillMoca}S. S. Miller\ and\ P. T. Mocanu, {\it Differential subordinations}, Monographs and Textbooks in Pure and Applied Mathematics, 225, Marcel Dekker, Inc., New York, 2000.
		
		\bibitem{Mocanu}P. T. Mocanu, On starlike functions with respect to symmetric points, in {\it Seminar of geometric function theory}, 129--134, Preprint, 82-4, Univ. ``Babe\c{s}-Bolyai'', Cluj.
		
		
		\bibitem{NPon}I. R. Nezhmetdinov\ and\ S. Ponnusamy, On the class of univalent functions starlike with respect to $N$-symmetric points, Hokkaido Math. J. {\bf 31} (2002), no.~1, 61--77.
		
		\bibitem{PadMan}K. S. Padmanabhan\ and\ R. Manjini, Certain applications of differential subordination, Publ. Inst. Math. (Beograd) (N.S.) {\bf 39(53)} (1986), 107--118.
		
		\bibitem{PadPar}K. S. Padmanabhan\ and\ R. Parvatham, Some applications of differential subordination, Bull. Austral. Math. Soc. {\bf 32} (1985), no.~3, 321--330.
		
		\bibitem{PadThang}K. S. Padmanabhan\ and\ J. Thangamani, The effect of certain integral operators on some classes of starlike functions with respect to symmetric points, Bull. Math. Soc. Sci. Math. R. S. Roumanie (N.S.) {\bf 26(74)} (1982), no.~4, 355--360.
		
		\bibitem{ravi1}V. Ravichandran, Functions starlike with respect to $n$-ply symmetric, conjugate and symmetric conjugate points, J. Indian Acad. Math. {\bf 26} (2004), no.~1, 35--45.
		
		\bibitem{ravi2}V. Ravichandran, Starlike and convex functions with respect to conjugate points, Acta Math. Acad. Paedagog. Nyh\'{a}zi. (N.S.) {\bf 20} (2004), no.~1, 31--37.
		
		
		\bibitem{rusch1}S. Ruscheweyh, {\it Convolutions in geometric function theory}, S\'{e}minaire de Math\'{e}matiques Sup\'{e}rieures, 83, Presses de l'Universit\'{e} de Montr\'{e}al, Montreal, QC, 1982.
		
		
		\bibitem{rusch2}S. Ruscheweyh, New criteria for univalent functions, Proc. Amer. Math. Soc. {\bf 49} (1975), 109--1150.
		
		
		\bibitem{ruscSS}St. Ruscheweyh\ and\ T. Sheil-Small, Hadamard products of Schlicht functions and the P\'{o}lya-Schoenberg conjecture, Comment. Math. Helv. {\bf 48} (1973), 119--135.
		
		\bibitem{saka} K.\ B. Sakaguchi, On a certain univalent mapping, J. Math. Soc. Japan {\bf 11} (1959), 72--75.
		
		\bibitem{shan}T. N. Shanmugam, Convolution and differential subordination, Internat. J. Math. Math. Sci. {\bf 12} (1989), no.~2, 333--340.
		
		\bibitem{ShanRamaRavi}T. N. Shanmugam, C. Ramachandran\ and\ V. Ravichandran, Fekete-Szeg\H{o} problem for subclasses of starlike functions with respect to symmetric points, Bull. Korean Math. Soc. {\bf 43} (2006), no.~3, 589--598.
		
		\bibitem{jan}J. Stankiewicz, Some remarks on functions starlike with respect to symmetric points, Ann. Univ. Mariae Curie-Sk\l odowska Sect. A {\bf 19} (1965), 53--59 (1970).
		
	\end{thebibliography}
\end{document}